\documentclass[12pt,twoside]{article}
 
\usepackage[margin=1in]{geometry} 
\usepackage{amsmath,amsthm,amssymb}
\usepackage{amsfonts}
\usepackage{hyperref}
\usepackage{fancyhdr}
\usepackage{graphicx}
\usepackage{tikz}
\usetikzlibrary{decorations.pathreplacing}
\newenvironment{theorem}[2][Theorem]{\begin{trivlist}
\item[\hskip \labelsep {\bfseries #1}\hskip \labelsep {\bfseries #2.}]}{\end{trivlist}}
\newenvironment{lemma}[2][Lemma]{\begin{trivlist}
\item[\hskip \labelsep {\bfseries #1}\hskip \labelsep {\bfseries #2.}]}{\end{trivlist}}

\newenvironment{corollary}[2][Corollary]{\begin{trivlist}
\item[\hskip \labelsep {\bfseries #1}\hskip \labelsep {\bfseries #2.}]}{\end{trivlist}}
\newenvironment{definition}[2][Definition]{\begin{trivlist}
\item[\hskip \labelsep {\bfseries #1}\hskip \labelsep {\bfseries #2.}]}{\end{trivlist}}
\newtheorem{remark}{Remark}[section]
\newtheorem{exampleone}{Example: The Hollow Square in $\mathbb{R}^2$}[section]
\newtheorem{exampletwo}{Example: Hausdorff Dimension of the Cantor Set}[section]
\newtheorem{exampleeem}{Example: Radial Projection}
\newtheorem{exampleeems}{A Familiar Example}
\newtheorem{op}{Open Problem}[section]
\pagestyle{fancy}
\fancyhf{}
\fancyhead[CO]{\textsc{Owen Drummond} \hfill \thepage} 
\fancyhead[CE]{\textsc{Geometric Analysis of Energy Minimizing Maps} \hfill \thepage} 

\begin{document}
 
 \newcommand{\keywords}[1]{%
  \begin{flushleft}
    \textbf{Keywords:} \small #1
  \end{flushleft}
}
\title{Geometric Analysis of Energy Minimizing Maps: Tangent Maps and Singularities} 
\author{
  Owen Drummond \\
  Department of Mathematics, Rutgers University \\
  \texttt{owen.drummond@rutgers.edu} \\
  \\
  \small{Advised by: Natasa Sesum} \\
}
 
\maketitle
\begin{abstract}
    Energy minimizing maps (E.M.M.'s) play a central role in the calculus of variations, partial differential equations (PDEs), and geometric analysis. These maps are often embedded into $C^\infty$ Riemannian manifolds to minimize the Dirichlet Energy functional under certain prescribed conditions. For understanding physical phenomena where systems naturally evolve to states of minimal energy, the geometric analysis of these maps has provided elucidating insights. This paper explores the geometric and analytic properties of energy minimizing maps, tangent maps, and the singular set (sing(u)). We begin by establishing key concepts from analysis, including the Sobolev Space $W^{1,2}$, harmonic functions, and Hausdorff dimension. Significant results about the density function, its upper semi-continuity, and the compactness theorem for tangent maps, and theorems for homogeneous degree zero minimizers are presented. Also analyzed in detail is the singular set (sing(u)), its Hausdorff dimension, and geometric structure. We conclude with open problems that are rich in research potential, and the far-reaching implications if these problems are to be solved. 

    \keywords{Energy Minimizing Maps, Tangent Maps, Singular Sets, Geometric Analysis, Partial Differential Equations}
\end{abstract}

\tableofcontents

\section{Introduction}
Energy minimizing maps (E.M.M.'s) have emerged as a cornerstone in the fields of calculus of variations, partial differential equations (PDEs), and geometric analysis. E.M.M.'s find applications throughout various domains, including material science, fluid dynamics and general relativity, where the minimization of the Dirichlet energy functional plays a crucial role in predicting system behaviors. Historically, the calculus of variations has provided robust tools for identifying and analyzing minimizers of various functionals. The Dirichlet energy functional, central to the theory of harmonic maps, serves as a critical point for this study. Harmonic maps, which generalize harmonic functions to mappings between manifolds, highlight the importance of E.M.M.'s in revealing intricate geometric structures and advancing both pure and applied mathematics.

A significant challenge in the theory of E.M.M.'s is understanding the structure, geometric properties, and regularity of the singular set (sing(u)), the set of points at which the map $u$ fails to be smooth. Here, we define $u$ to be a map from an open subset $\Omega \subset \mathbb{R}^n$ to $N$, a $C^\infty$ Riemannian manifold embedded in some euclidean space $\mathbb{R}^p$. The analysis of sing(u), invloves exploring Hausdorff dimension and geometric characteristics, which lies at the cross section of Geometric Measure Theory. Tangent maps are critical in the local analysis of the singular set, revealing the local structure of singular points by convergence along a subsequence of rescaled energy minimizing maps. This involves studying the density function, which measures how the Dirichlet energy is distributed around these points, and understanding its upper semi-continuity. This paper provides a comprehensive exploration of the geometric and analytic properties of E.M.M.'s, focusing on tangent maps and the singular set. This paper will follow the structure of Leon Simon's \textit{Theorems on Regularity and Singularity of Energy Minimizing Maps}\cite{simon1996theorems}

\section{Analytic Preliminaries}
\subsection{The Sobolev Space $W^{1,2}(\Omega)$}

While a full exposition of Sobolev Spaces $W^{k,p}(\Omega)$ can be found in textbooks such as \textit{Partial Differential Equations} (2010, pp. 239-292)\cite{evans2010partial} by Lawrence Evans, here we are concerned with one Sobolev space in particular, which is \(W^{1,2}(\Omega)\), for \(\Omega \subset \mathbb{R}^n\) open. Here we define functions with \(L^2\) gradient:

\begin{definition}{1 (Functions with \(L^2\) Gradient)}
    We say that \(u \in L^2(\Omega)\) has a \textit{gradient in \(L^2\)} if there exist functions \(r_1, ..., r_n \in L^2(\Omega)\) such that
    \[
    \int_{\Omega} \phi r_i = - \int_{\Omega} u D_i \phi, \quad \forall \phi \in C_c^{\infty}(\Omega)
    \]
\end{definition}

Now note that if such $r_i$'s exist then they are unique, and if $u \in C^1(\Omega)$, then by integration by parts, the identity holds with $r_j=D_j u$, with $D_j$ being the usual partial derivative. Thus, the notion of functions with an $L^2$ gradient generalizes the notion of the classical derivative to include a larger space of functions (u need not be $C^1$), and we call such $r_j$ the $L^2$ \textit{weak derivatives} of $u$. Such functions $u$ belong to the Sobolev Space $W^{1,2}(\Omega)$. 

\begin{definition}{2 (Inner product on functions with an $L^2$ gradient)}
For $u,v$ functions with $L^2$ gradient, we define the inner product
\[
\langle u,v \rangle = \langle u,v \rangle_{L^2(\Omega)} + \sum_{j=1}^n \langle D_j u, D_j v \rangle_{L^2(\Omega)}
\]
and using the completeness of $L^2(\Omega)$, it is easy to verify that $W^{1,2}(\Omega)$ equipped with this inner product is a Hilbert Space. We will denote the typical inner product norm by $\lVert \cdot \rVert_{W^{1,2}(\Omega)}$.
    
\end{definition}

\subsection{Harmonic Functions}
\begin{definition}{3 (Harmonic Functions)}
    For $\Omega$ an open subset of $\mathbb{R}^n$, the function $u: \Omega \rightarrow \mathbb{R}$ is said to be \textit{harmonic} if $u \in C^2(\Omega)$ and $\Delta u=0$, where
    \[
    \Delta u = \sum_{j=1}^n D_j D_j u, \hspace{0.5 cm} D_j=\frac{\partial}{\partial x_j}
    \]
    note that we can also write $\Delta u = \textnormal{div}(Du)$, where $Du=(D_1 u, D_2 u, ..., D_n u)$ denotes the gradient of $u$.
\end{definition}

Harmonic functions exhibit a number of fascinating properties, one of those being the \textbf{mean value property}. Let $B_R(x_0)$ be a ball such that $\Bar{B_R(x_0)} \subset \Omega$, and by using divergence theorem, we obtain that
\[
u(x_0)=\frac{1}{\omega_n R^n} \int_{B_R(x_0)} u
\]
where $\omega_n$ denotes the volume of $B_1(0)$, the unit n-sphere. Multiplying this equation by $R^n$ and differentiating with respect to $R$ we obtain the second mean value property for $S^{n-1}$:
\[
u(x_0)=\frac{1}{\sigma_n R^{n-1}} \int_{S_R(x_0)} u
\]
where $\sigma_n = n\omega_n$ denotes the measure of $S^{n-1}=\partial B_1(0)$ and $S_R(x_0) = \partial B_R(x_0)$.
\subsection{Monotonicity Formula and Poincare Inequality}
\begin{theorem}{1 (The Monotonicity Formula)}
Let $0<\sigma<\rho<\rho_0$, then we have the important "monotonicity identity":
\[
\rho^{2-n} \int_{B_\rho (y)} |Du|^2-\sigma^{2-n} \int_{B_\sigma (y)} |Du|^2=2\int_{B_\rho (y) \setminus B_\sigma (y)} R^{2-n}|\frac{\partial u}{\partial R}|^2
\]
this formula holds provides $\bar{B}_{\rho_0}(y) \subset \Omega$. Here, $R=|x-y|$ and $\frac{\partial}{\partial R}$ is the directional derivative in the radial direction $\frac{(x-y)}{|x-y|}$. While I will not provide the proof of this identity here, there are many full proofs one can enjoy (ex. Simon). 
\end{theorem}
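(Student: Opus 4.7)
The plan is to derive the stated integral formula from a pointwise (in $\rho$) differential identity obtained via the \emph{inner variation} of the Dirichlet energy at $u$. Translate so that $y=0$ and write $R=|x|$. Because $u$ is energy minimizing, it is stationary under compactly supported domain reparametrizations $x \mapsto x + tX(x)$; differentiating $\int |D(u\circ\psi_t^{-1})|^2\,dx$ in $t$ at $t=0$ produces the standard \emph{stress-energy identity}
\[
\int_\Omega \bigl(|Du|^2 \delta_{ij} - 2\, D_i u \cdot D_j u\bigr)\, D_i X^j \, dx \;=\; 0, \qquad \forall\, X \in C^1_c(\Omega;\mathbb{R}^n).
\]
Crucially this uses only variations of the domain, so the target constraint $u(\Omega)\subset N$ plays no role here.

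Next I would test this identity against the radial vector field $X(x)=\zeta(R)\,x$, where $\zeta$ is a smooth nonincreasing cutoff supported in $[0,\rho_0)$. A direct computation using $D_i X^j = \zeta(R)\,\delta_{ij} + \zeta'(R)\, x_i x_j / R$ together with the identity $x_i D_i u = R\,\partial u/\partial R$ collapses the stress-energy identity to
\[
(n-2)\int_\Omega \zeta(R)\, |Du|^2\, dx \;+\; \int_\Omega \zeta'(R)\, R\,\bigl(|Du|^2 - 2\,|\partial u/\partial R|^2\bigr)\, dx \;=\; 0.
\]
Approximating $\zeta$ by $\chi_{[0,\rho]}$ so that $-\zeta'$ concentrates on $\partial B_\rho$, the coarea formula then delivers the pointwise identity
\[
(2-n)\int_{B_\rho}|Du|^2 \;+\; \rho\int_{\partial B_\rho}|Du|^2 \;=\; 2\rho\int_{\partial B_\rho}|\partial u/\partial R|^2.
\]

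Dividing by $\rho^{n-1}$, the left-hand side is exactly $\tfrac{d}{d\rho}\bigl(\rho^{2-n}\int_{B_\rho}|Du|^2\bigr)$ (by coarea applied to the second term), so
\[
\frac{d}{dr}\!\left(r^{2-n} \int_{B_r(0)}|Du|^2\right) \;=\; 2\, r^{2-n}\int_{\partial B_r} |\partial u/\partial R|^2.
\]
Integrating in $r$ from $\sigma$ to $\rho$ and converting the right-hand iterated integral back to an integral over the annulus $B_\rho\setminus B_\sigma$ via coarea yields exactly the stated identity.

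The main obstacle is the first variation step: one must rigorously derive the stress-energy identity for $W^{1,2}$ minimizers into the Riemannian target $N$ (domain variations being the natural choice, since outer target variations are obstructed by the manifold constraint) and then carefully justify the passage from smooth cutoffs $\zeta$ to the sharp indicator $\chi_{[0,\rho]}$, which requires absolute continuity of $\rho \mapsto \int_{B_\rho}|Du|^2$ and a Lebesgue-point argument for $|\partial u/\partial R|^2$ on almost every sphere $\partial B_\rho$. Once that variational input is secured, the remainder of the argument reduces to the product rule and coarea.
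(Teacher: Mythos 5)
Your proposal is correct, and it is essentially the proof the paper is pointing to: the paper itself omits the argument and defers to Simon, whose proof is exactly this inner-variation/stress-energy computation (stationarity under domain reparametrizations, testing with $X(x)=\zeta(R)\,x$, passing to the sharp cutoff to get $\frac{d}{dr}\bigl(r^{2-n}\int_{B_r}|Du|^2\bigr)=2r^{2-n}\int_{\partial B_r}|\partial u/\partial R|^2$ for a.e.\ $r$, then integrating and applying the coarea formula). Your algebra checks out, and you correctly identify the only delicate points, namely that the stress-energy identity for a $W^{1,2}$ minimizer comes from domain (not target) variations and that the differential identity holds first only for almost every radius, with the final identity for all $0<\sigma<\rho<\rho_0$ recovered by absolute continuity of $r\mapsto\int_{B_r}|Du|^2$.
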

\begin{theorem}{2 (The Poincare Inequality)}
Let $\Omega \subset \mathbb{R}^n$ be a bounded and connected Lipschitz domain. There exists a constant $C_{\Omega}$ depending only on the choice of domain $\Omega$ such that for all $u \in W^{1,2}(\Omega)$
\[
\int_{\Omega} |u-\lambda|^2 \leq C_\Omega \int_{\Omega} |Du|^2
\]
holds, where $\lambda=|\Omega|^{-1} \int_{\Omega} u$. This is a critical tool from analysis and gives a bound for $u$ based on its gradient.
\end{theorem}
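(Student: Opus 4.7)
The plan is to prove the Poincar\'e inequality by contradiction, leveraging the compact embedding $W^{1,2}(\Omega) \hookrightarrow L^2(\Omega)$ afforded by the Rellich--Kondrachov theorem. This is the place where the Lipschitz hypothesis on $\partial \Omega$ enters essentially. First, I would reduce to the mean-zero case: substituting $v = u - \lambda$ (which leaves $Du$ unchanged since $\lambda$ is constant) reduces the claim to showing that there exists $C_\Omega$ such that
\[
\int_\Omega |v|^2 \leq C_\Omega \int_\Omega |Dv|^2 \quad \text{for every } v \in W^{1,2}(\Omega) \text{ with } \int_\Omega v = 0.
\]

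Next, I would suppose toward contradiction that no such $C_\Omega$ exists. Then for each $k \in \mathbb{N}$ one can find $v_k \in W^{1,2}(\Omega)$ with zero mean satisfying $\int_\Omega |v_k|^2 > k \int_\Omega |Dv_k|^2$. After normalizing so that $\lVert v_k \rVert_{L^2(\Omega)} = 1$, we obtain $\lVert Dv_k \rVert_{L^2(\Omega)}^2 < 1/k$, so the sequence $\{v_k\}$ is bounded in $W^{1,2}(\Omega)$. The Rellich--Kondrachov compactness theorem produces a subsequence (not relabeled) converging strongly in $L^2(\Omega)$ to some $v$, while Hilbert-space weak compactness allows a further passage to a subsequence with $v_k \rightharpoonup v$ in $W^{1,2}(\Omega)$.

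The limit $v$ inherits $\lVert v \rVert_{L^2(\Omega)} = 1$ and $\int_\Omega v = 0$ from the strong $L^2$ convergence. By lower semi-continuity of the norm under weak convergence,
\[
\int_\Omega |Dv|^2 \leq \liminf_{k \to \infty} \int_\Omega |Dv_k|^2 = 0,
\]
so the weak gradient $Dv$ vanishes almost everywhere on $\Omega$. Since $\Omega$ is connected, a standard argument (mollify $v$, observe that the mollifications are constant on compactly contained connected open sets, and pass to the limit) shows that $v$ is almost everywhere equal to a constant. The mean-zero condition forces this constant to be $0$, contradicting $\lVert v \rVert_{L^2(\Omega)} = 1$.

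The main obstacle is the appeal to Rellich--Kondrachov, whose proof on a Lipschitz domain relies on a nontrivial $W^{1,2}$ extension operator $E : W^{1,2}(\Omega) \to W^{1,2}(\mathbb{R}^n)$ adapted to the Lipschitz structure of $\partial \Omega$, followed by a mollification-plus-Arzel\`a--Ascoli argument. A secondary technical point is justifying that a $W^{1,2}$ function with vanishing weak gradient on a connected domain is constant; as noted above, this is handled via mollification rather than pointwise calculus. Once these two analytic tools are accepted, the compactness-and-contradiction skeleton above closes cleanly.
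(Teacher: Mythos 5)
The paper states this Poincar\'e inequality without proof, treating it as a standard analytic tool (much as it does with the monotonicity formula), so there is no in-paper argument to compare yours against. Your proposal is the canonical compactness-and-contradiction proof --- essentially the one in Evans's \textit{Partial Differential Equations}, which the paper itself cites for Sobolev-space background --- and it is correct as written: the reduction to the mean-zero case, the normalization $\lVert v_k \rVert_{L^2(\Omega)} = 1$ with $\lVert Dv_k \rVert_{L^2(\Omega)}^2 < 1/k$, the extraction of a strongly $L^2$-convergent and weakly $W^{1,2}$-convergent subsequence via Rellich--Kondrachov, the weak lower semi-continuity step forcing $Dv = 0$, and the use of connectedness to conclude $v$ is constant (hence zero by the mean-zero condition, contradicting unit norm) all fit together without gaps. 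You are also right to flag the two places where real analytic input is being imported: the extension operator underlying Rellich--Kondrachov on a Lipschitz domain (this is exactly where the Lipschitz and boundedness hypotheses are used), and the mollification argument showing that a $W^{1,2}$ function with vanishing weak gradient on a connected open set is a.e.\ constant. One caveat worth noting for honest bookkeeping: this argument is non-constructive, so it yields existence of $C_\Omega$ but no explicit value; a direct approach (e.g.\ via the representation of $u(x)-u(y)$ as an integral of $Du$ along segments, for convex or star-shaped domains) gives quantitative constants but under stronger geometric hypotheses. For the purposes of this paper, where the inequality is only used qualitatively, your proof is entirely adequate.
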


\subsection{Hausdorff Measure and Dimension}
\subsubsection{Hausdorff Measure}
Hausdorff Measure was introduced in Geometric Measure Theory as a generalization for length, area, or volume for sets that may be highly irregular and have non-integer dimension. While Lebesgue Measure captures this information for measurable sets in Euclidean Space, where it fails is with sets of interest in GMT, namely fractals, pathological null sets, and self-similar sets. As a result, Felix Hausdorff addressed this issue in 1918 with the invention of Hausdorff measure. This allows for the measurement of sets with non-integer dimension, as will be further explicated. Both Hausdorff Measure and Dimension have become fundamental tools in geometric measure theory for the characterization of rectifiable sets, fractals, and self similar sets.
The construction of Hausdorff Measure, for which you can see a complete treatment in \textit{Geometric Measure Theory}\cite{federer1996geometric} by Herbert Federer, is akin to the construction of Lebesgue Measure. One starts with a notion of outer measure and then restricts to sets that belong to the Carathéodory $\sigma$-algebra.

\begin{definition}{4 (n-dimensional Hausdorff Measure)}
The following also acts as an outer measure. Let $(X,\rho)$ be a metric space, and let $S \subset X$ be any subset, and $\delta > 0$. We first define
\[
H_\delta^d(S)=\inf \left\{\sum_{i=1}^{\infty} \left(\text{diam}(U_i)\right)^d : \bigcup_{i=1}^{\infty} U_i \supseteq S, \text{diam}U_i < \delta \right\}
\]
Notice that $H_\delta^d(S)$ is monotone nonincreasing in $\delta$, and hence $\lim_{\delta \to 0}$ exists, though it may be infinite. Thus we define the \textbf{n-dim Hausdorff Measure} to be 
\[
H^d(S)=\lim_{\delta \to 0} H_\delta^d(S) (=\sup_{\delta > 0} H_\delta^d(S))
\]
\end{definition}
Intuitively, the Hausdorff measure is capturing the total size of smaller and smaller open covers an arbitrary set $S$, which is where the outer measure structure is most apparent. However, the dimensionality of $H$ needs clarification:
\begin{exampleone}{1}
Consider the hollow unit square $S_h$ in $\mathbb{R}^2$. For $d=1$, $H^d(S_h)$ is the 'size' of $S_h$ with respect to the 1-dimensional case. That is, our open cover of $S_h$ becomes line segments, so $H^d$ is measuring the perimeter of $S_h$
\[
\implies H^d(S_h)=4
\]
However, $H^2(S_h)$ is measuring the \textit{area} of $S_h$ in $\mathbb{R}^2$. Because $S_h$ is hollow and composed of only line segments, these do not contribute any area, and hence
\[
\implies H^2(S_h)=0
\]
\end{exampleone}
\subsubsection{Hausdorff Dimension}
\begin{definition}{5 (Hausdorff Dimension)}
    Let $S \subset X$ as before. The Hausdorff dimension $\dim_{H}$ is given by
    \[
    \dim_{H}=\inf \left\{d \geq 0: H^d(X)=0\right\}
    \]
\end{definition}
Simply put, this is the smallest value of $d$ for which the Hausdorff Measure $H^d$ is identically zero. Let us proceed with an illuminating example.
\begin{exampletwo}{2}

\begin{center}
\begin{tikzpicture}
    \draw[thick] (0,0) -- (9,0);
    \node at (0,-0.3) {0};
    \node at (9,-0.3) {1};
    \node at (-0.5,0) {$C_0$};

    \draw[thick] (0,-1) -- (3,-1);
    \draw[thick] (6,-1) -- (9,-1);
    \node at (-0.5,-1) {$C_1$};

    \draw[thick] (0,-2) -- (1,-2);
    \draw[thick] (2,-2) -- (3,-2);
    \draw[thick] (6,-2) -- (7,-2);
    \draw[thick] (8,-2) -- (9,-2);
    \node at (-0.5,-2) {$C_2$};

    \draw[thick] (0,-3) -- (1/3,-3);
    \draw[thick] (2/3,-3) -- (1,-3);
    \draw[thick] (2,-3) -- (7/3,-3);
    \draw[thick] (8/3,-3) -- (3,-3);
    \draw[thick] (6,-3) -- (19/3,-3);
    \draw[thick] (20/3,-3) -- (7,-3);
    \draw[thick] (8,-3) -- (25/3,-3);
    \draw[thick] (26/3,-3) -- (9,-3);
    \node at (-0.5,-3) {$C_3$};

\end{tikzpicture}
\end{center}
\begin{center}
    Figure 1: Construction of the Cantor Set
\end{center}

Here, I will assume that the reader is already aware of the middle-thirds Cantor set $C \subset [0,1]$ given above, and thus I will not repeat its construction. For calculating the Hausdorff dimension of $C$, notice that after $n$ iterations, there are $2^n$ segments of length $\frac{1}{3^n}$. Thus, the total length of the segments is $\left(\frac{2}{3}\right)^n$. Since we are in $\mathbb{R}$, we are covering by line segments, and the diameter of each segment after $n$ steps is $\frac{1}{3^n}$. Thus,
\begin{align*}
H^d(C)=\lim_{\delta \to 0} \inf \left\{\sum_{i=1}^{\infty} \left(\text{diam}(U_i)\right)^d : \bigcup_{i=1}^{\infty} U_i \supseteq C, \text{diam}U_i < \delta \right\} \\
\implies H^d(C) = \lim_{n \rightarrow \infty} \sum_{i=1}^{2^n} \left(\frac{1}{3^n}\right)^d \implies H^d(C) = \lim_{n \rightarrow \infty} 2^n \left(\frac{1}{3^n}\right)^d = \lim_{n \rightarrow \infty} \left(2 \cdot 3^{-d}\right)^n
\end{align*}
and thus the Hausdorff dimension $d$ satisfies the equation
\[
0=H^d(C)=\lim_{n \rightarrow \infty} \left(2 \cdot 3^{-d}\right)^n
\]
and thus we require $2 \cdot 3^{-d} < 1$
\[
\implies \ln(2)+\ln\left(3^{-d}\right)<0 \implies \ln(2)-d\ln(3)<0 \implies d>\frac{\ln(2)}{\ln(3)}
\]
and thus \textbf{the Hausdorff dimension of the Cantor set is $\frac{\ln(2)}{\ln(3)} \approx 0.631$}. 
\end{exampletwo}
What is the significance of this value of 0.631? Well, intuitively, the Cantor Set is a null set, that is the Lebesgue Measure of $C$ is zero, but still contributes some space on the interval $[0,1]$. As the Hausdorff dimension of the unit segment $[0,1]$ is 1, while that of a single point would be 0. With $\dim_H(C) \approx 0.631$ the Cantor set's complexity in terms of its contribution to $[0,1]$ lies between that of a point and a line in $\mathbb{R}$.

\section{Energy Minimizing Maps}
\subsection{Energy Definitions}
First, we will establish some notation that will be used for the remainder of this paper. Suppose $\Omega$ is an open subset of $\mathbb{R}^n$, for $n \geq 2$, and that $N$ is a compact, $C^{\infty}$  Riemannian Manifold which is isometrically embedded into $\mathbb{R}^p$ for some $p$. We will look at maps $u: \Omega \rightarrow \mathbb{R}^p$ such that $u(\Omega) \subset N$. First, for $\Omega$ and $N$ defined in this way, we denote the Sobolev Space $W_{\text{loc}}^{1,2}(\Omega; N)$ to be the set of functions $u \in W_{\text{loc}}^{1,2}(\Omega; \mathbb{R}^p)$ with $u(x) \in N$ a.e. $x \in \Omega$. First we define a key function in the theory of harmonic maps:

\begin{definition}{6 (The Energy Functional)}
The energy $\mathcal{E}_{B_\rho (y)}(u)$ for a function $u \in W_{\text{loc}}^{1,2}(\Omega; N)$ in a ball $B_\rho (y) = \{ x: |x-y|<\rho \}$ with $\Bar{B}_{\rho} \subset \Omega$ is 
\[
\mathcal{E}_{B_\rho (y)}(u)=\int_{B_\rho (y)} |Du|^2
\]
\end{definition}
note that here $Du$ refers not to the gradient, but to the $n \times p$ matrix with entries given by $D_i u^j = \frac{\partial u^j}{\partial x^i}$, and $|Du|^2=\sum_{i=1}^n \sum_{j=1}^p (D_i u^j)^2$. 

\begin{definition}{7 (Energy Minimizing Maps)}
A map $u \in W^{1,2}(B_{\rho}(y); N)$ is called \textit{energy minimizing} if for each ball $B_\rho (y) \subset \Omega$
\[
\mathcal{E}_{B_\rho (y)}(u) \leq \mathcal{E}_{B_\rho (y)}(w) (\implies \int_{B_\rho (y)} |Du|^2 \leq \int_{B_\rho (y)} |Dw|^2)
\]
for every $w \in W^{1,2}(B_\rho (y); N)$ with $w=u$ in a neighborhood of $\partial B_{\rho}(y)$. 
\end{definition}
In fact, all harmonic maps are energy minimizing. Let us observe this through and example
\begin{exampleeem}
Define the map $u(x): \mathbb{R}^n \setminus \{ 0 \} \rightarrow S^{n-1}$ by
\[
u(x)=\frac{x}{|x|}
\]
it is easy to verify that $u$ is harmonic, but we will prove that $u$ is energy minimizing without appealing to this fact. To find the energy of $u$, given by $\mathcal{E}_{B_\rho (y)}(u)=\int_{B_\rho (y)} |Du|^2$, we must first find the square of the gradient $Du$:
\[
\frac{\partial u^i}{\partial x_j}=\frac{\delta_{ij}|x|-x_i \frac{x_j}{|x|}}{|x|^2}=\frac{\delta_{ij}}{|x|}-\frac{x_i x_j}{|x|^3}
\]
and thus
\[
|Du|^2=\sum_{i,j} (\frac{\partial u^i}{\partial x_j})^2 = \sum_{i,j} (\frac{\delta_{ij}}{|x|}-\frac{x_i x_j}{|x|^3})^2 = \frac{1}{|x|^2} (n-\frac{|x|^2}{|x|^2}) = \frac{n-1}{|x|^2}
\]
Now to find the total energy, we integrate away from the singularity at $x=0$ by integrating over the domain $\mathbb{R}^n \setminus B_\epsilon$, where $B_\epsilon$ is an epsilon ball centered at the origin.
\[
\implies \mathcal{E}_{\text{tot}}(u) = \int_{\mathbb{R}^n \setminus B_\epsilon} \frac{n-1}{|x|^2} dx
\]
To evaluate this integral, we convert to spherical coordinates, with $dx=r^{n-1} dr d\Omega$, so we have
\begin{align*}
\mathcal{E}_{\text{tot}}(u) &= \int_{S^{n-1}} d\Omega \int_{\epsilon}^{\infty} \frac{n-1}{r^2} r^{n-1} dr \\
&=(n-1) \int_{S^{n-1}} d\Omega \int_{\epsilon}^{\infty} r^{n-3} dr \\
&=(n-1)\text{Vol}(S^{n-1})([\frac{r^{n-2}}{n-2}]_{\epsilon}^{\infty}) \\
&=\lim_{r \to \infty}(r^{n-2}-\epsilon^{n-2}) \frac{(n-1)\text{Vol}(S^{n-1})}{n-2}
\end{align*}
Note that the integral is determined by the limit of $r \to \infty$, but $u$ is a map to a compact domain, and hence the overall energy is finite. By the finiteness of the energy integral, we can simply choose some scaling factor such that for a given $\delta > 0$, $\exists \lambda>0$ such that $u_\lambda(x)=\lambda \frac{x}{|x|}$, and hence
\[
\mathcal{E}_{\text{tot}}(u_\lambda)=\int_{\mathbb{R}^n \setminus B_\epsilon} |Du_\lambda|^2 dx < \delta
\]
and hence $u_\lambda$ is energy minimizing.
\vspace{1 cm}
\begin{center}
\begin{tikzpicture}

    \draw[thick] (0,0) circle (2);
    \node at (2.3, 0) {$S^1$};
    
    \fill (0,0) circle (2pt);
    \node at (0.2, 0.4) {0};

    \fill (3,1.5) circle (2pt);
    \node at (3.3, 1.5) {$x$};
    
    \draw[dashed, ->] (0,0) -- (3,1.5);
    \draw[->] (0,0) -- (1.6, 0.8);
    \node at (2, 1) {$\frac{x}{|x|}$};

    \fill (1.6, 0.8) circle (2pt);

    \node at (3.5, 1) {$\mathbb{R}^2 \setminus \{0\}$};

\end{tikzpicture} \\

Figure 2: Radial projection from $\mathbb{R}^2 \setminus \{0\}$ to $S^1$

\end{center}

\end{exampleeem}
\begin{remark} 
Note that the Sobolev Space $W_{\text{loc}}^{1,2}(\Omega)$ is used in the definition for the energy functional, but $W^{1,2}(\Omega)$ is used for energy minimizing maps. For clarity, while the weaker (local) condition is sufficient for the definition of energy, analysts prefer to work with $W^{1,2}(\Omega)$ due to the regularity and completeness of the space which possesses more power in solving PDEs. From now on, you can forget about that ignominious space $W_{\text{loc}}^{1,2}(\Omega)$.
\end{remark}

There are many results about the regularity of energy minimizing maps. Since my interest lies in singularities as opposed to regularities, I will skip these results for now, though for further reference, I recommend Chapter 2 of Leon Simon's work (1996, pp. 19-51)\cite{simon1996theorems}. Key results include the $\epsilon$-regularity theorem, The Monotonicity Formula, Luckhaus' Lemma, and the Reverse Poincare Inequality.

\subsection{Density for E.M.M.'s}
\begin{definition}{8 (The Density Function)}
While defined in a slightly different manner than in the geometric measure theoretical sense, we define the \textit{density function} $\Theta_u: \Omega \rightarrow \mathbb{R}$ as
\[
\Theta_u(y)=\lim_{\rho \downarrow 0} \rho^{2-n} \int_{B_{\rho}(y)} |Du|^2 
\] and this limit exists for each point of $\Omega$ by the monotonicity formula.
\end{definition}

Now an important property of $\Theta_u$:

\begin{theorem}{3}
    $\Theta_u$ is upper semi-continuous on $\Omega$, that is 
    \[
    y_j \to y \in \Omega \implies \Theta_u(y) \geq \limsup_{j \to \infty} \Theta_u(y_j)
    \]
\end{theorem}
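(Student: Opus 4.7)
The plan is to exploit the monotonicity formula (Theorem 1), which tells us that the function $\rho \mapsto \rho^{2-n}\int_{B_\rho(y)}|Du|^2$ is nondecreasing on $(0,\rho_0)$ whenever $\bar{B}_{\rho_0}(y) \subset \Omega$, and that $\Theta_u(y)$ is precisely the infimum (equivalently, the $\rho \downarrow 0$ limit) of this quantity. In particular, for every admissible $\rho$,
\[
\Theta_u(y) \leq \rho^{2-n}\int_{B_\rho(y)}|Du|^2,
\]
and the right-hand side decreases to $\Theta_u(y)$ as $\rho \downarrow 0$. This one-sided comparison is exactly the shape of an upper semi-continuity proof: I want to dominate $\Theta_u(y_j)$ by something that, after passing to the limit in $j$ and then in $\rho$, converges to $\Theta_u(y)$.

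First I would fix $\rho > 0$ with $\bar{B}_\rho(y) \subset \Omega$. For $j$ sufficiently large, $|y_j - y| < \rho$, so setting $\tau_j := \rho - |y_j - y| > 0$, the triangle inequality gives $\bar{B}_{\tau_j}(y_j) \subset \bar{B}_\rho(y) \subset \Omega$. Applying the monotonicity formula \emph{at the center} $y_j$ with outer radius $\tau_j$ yields
\[
\Theta_u(y_j) \;\leq\; \tau_j^{2-n}\int_{B_{\tau_j}(y_j)}|Du|^2 \;\leq\; \tau_j^{2-n}\int_{B_\rho(y)}|Du|^2,
\]
where the second inequality uses the containment $B_{\tau_j}(y_j) \subset B_\rho(y)$.

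Next I would take $\limsup_{j \to \infty}$. Since $|y_j - y| \to 0$, we have $\tau_j \to \rho$, so $\tau_j^{2-n} \to \rho^{2-n}$, while $\int_{B_\rho(y)}|Du|^2$ is a fixed finite constant. This gives
\[
\limsup_{j \to \infty} \Theta_u(y_j) \;\leq\; \rho^{2-n}\int_{B_\rho(y)}|Du|^2.
\]
Finally, sending $\rho \downarrow 0$ and invoking the monotonicity formula at the point $y$, the right-hand side decreases to $\Theta_u(y)$, finishing the proof.

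The argument is essentially bookkeeping, and the only subtle point is the choice of center: monotonicity must be applied at the variable center $y_j$ (to get an upper bound on $\Theta_u(y_j)$), while the outer ball is kept fixed at $y$ so that the limit $\rho \downarrow 0$ reconstructs $\Theta_u(y)$. The geometric bridge between the two centers is the containment $B_{\tau_j}(y_j) \subset B_\rho(y)$ for $\tau_j = \rho - |y_j - y|$, which I expect to be the only place requiring explicit justification.
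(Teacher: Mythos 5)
Your proof is correct and follows essentially the same route as the paper: apply the monotonicity formula at the moving center $y_j$, dominate by the energy on a fixed ball around $y$ via a containment of balls, take $\limsup_j$, and then send the radius to zero. The only difference is bookkeeping --- you shrink the inner radius to $\tau_j = \rho - |y_j - y|$ while the paper instead enlarges the outer ball to $B_{\rho+\epsilon}(y)$ and takes an extra limit $\epsilon \downarrow 0$; your version is marginally cleaner for avoiding that extra parameter.
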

\begin{proof}
Let $\epsilon > 0$ and $\rho > 0$ with $\rho + \epsilon < \text{dist}(y, \partial \Omega)$. By the monoticity formula, we have that 
\[
\Theta_u(y_j) \leq \rho^{2-n} \int_{B_\rho (y_j)} |Du|^2
\]
for $j$ sufficiently large to ensure $\rho < \text{dist}(y_j, \partial \Omega)$. Now since $B_\rho (y_j) \subset B_{\rho+\epsilon} (y)$ for all $j$ large enough, we have that
\[
\Theta_u(y_j) \leq \rho^{2-n} \int_{B_{\rho+\epsilon} (y)} |Du|^2
\]
for sufficiently large $j$, so we obtain that $\limsup_{j \to \infty} \Theta_u(y_j) \leq \rho^{2-n} \int_{B_{\rho+\epsilon} (y)} |Du|^2$. By letting $\epsilon \downarrow 0$, we conclude that
\[
\limsup_{j \to \infty} \Theta_u(y_j) \leq \rho^{2-n} \int_{B_{\rho} (y)} |Du|^2 \implies \limsup_{j \to \infty} \Theta_u(y_j) \leq \Theta_u(y)
\] after taking the limit $\rho \downarrow 0$.
\end{proof}

The intuition behind the density is that it is a kind of averaging function, but for the energy functional rather than the function $u$. For example, for a function $f: \mathbb{R} \rightarrow \mathbb{R}$, think about the limit $\lim_{\rho \to 0} \frac{1}{\rho} \int_{B_{\rho} (y)} f d\lambda$, where $\lambda$ is the Lebesgue Measure on $\mathbb{R}$. By the Lebesgue Differentiation theorem, this value converges almost everywhere to $f(y)$. Similarly, density is an average value representation of the energy functional, though the a.e. convergence is not guaranteed due to the fact that $u$ is only $W^{1,2}(\Omega)$.

\subsection{Definition of reg(u) and sing(u)}
\begin{definition}{9 (Regular and Singular Sets)}
If $u \in W^{1,2}(\Omega: \mathbb{R}^p)$, then 
\[
\text{reg}(u) := \{ x \in \Omega : u \text{ is } C^{\infty} \text{ in a neighborhood of } x \}
\]
is the regular set of $u$, and 
\[
\text{sing}(u):=\Omega \setminus \text{reg}(u)
\]
is the singular set of $u$.
\end{definition}
\begin{exampleeems}
Again we consider the radial projection map, this time with dimension three or higher (otherwise sing(u) is empty, as we will see in Section 3). For $(x,y) \in \mathbb{R}^3 \times \mathbb{R}^{n-3}$, we define the map $u: \mathbb{R}^n \rightarrow S^2$ to be
\[
u(x,y)=\frac{x}{|x|}
\]
and we see that while obviously $0 \in \text{sing}(u) \implies \text{sing}(u) \neq \emptyset$, though there may be other nonzero singularities of $u$. In this case, due to the smoothness of radial projection, 0 is the only singularity of u, and hence
\[
\text{reg}(u) = \mathbb{R}^n \setminus \{0\}
\]
\end{exampleeems}
From this we obtain two corollaries immediately:
\begin{corollary}{1}
    There exists $\epsilon > 0$, depending only on $n, N$ such that if $B_{\rho}(y) \subset \Omega$ and if $\rho^{2-n} \int_{B_{\rho} (y)} |Du|^2 < \epsilon$, then $y \in \text{reg}(u)$ and $\sup_{B_{\rho/2} (y)} \rho^j |D^j u| \leq C$ for each $j=0,1,2,...,$ where $C$ depends only on $j,n,N$.
\end{corollary}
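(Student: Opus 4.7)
The plan is to derive this as a direct consequence of the $\epsilon$-regularity theorem for energy minimizing maps (cited above as one of Simon's results in Ch.~2) combined with a rescaling argument. The first assertion, $y \in \text{reg}(u)$, is essentially the unit-scale content of that theorem: there exists a dimensional constant $\epsilon_0 = \epsilon_0(n,N) > 0$ such that if $\tilde{u} \in W^{1,2}(B_1(0);N)$ is energy minimizing with $\int_{B_1(0)} |D\tilde{u}|^2 < \epsilon_0$, then $\tilde{u}$ is smooth on $B_{1/2}(0)$ with $\sup_{B_{1/2}(0)}|D^j \tilde{u}| \leq C_j(n,N)$ for every $j \geq 0$. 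The task therefore reduces to translating this unit-scale statement into the $\rho$-dependent form claimed in the corollary, which is accomplished through the dilation $x \mapsto y + \rho x$.

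First I would define the rescaled map $\tilde{u}(x) := u(y + \rho x)$ for $x \in B_1(0)$. Since energy minimization is invariant under translation and dilation of the domain (any competitor $\tilde{w}$ for $\tilde{u}$ pulls back to a competitor $w(z) = \tilde{w}((z - y)/\rho)$ for $u$ on $B_\rho(y)$, with energies differing by the common factor $\rho^{n-2}$), the map $\tilde{u}$ is itself energy minimizing on $B_1(0)$. A direct change of variables then gives
\[
\int_{B_1(0)} |D\tilde{u}|^2 \;=\; \rho^{2-n} \int_{B_\rho(y)} |Du|^2 \;<\; \epsilon,
\]
so if we set $\epsilon := \epsilon_0$, the hypothesis of the $\epsilon$-regularity theorem is satisfied. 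Applying it yields smoothness of $\tilde{u}$ on $B_{1/2}(0)$ together with the uniform bounds $\sup_{B_{1/2}(0)}|D^j \tilde{u}| \leq C_j$. The chain rule then gives $D^j u(x) = \rho^{-j}(D^j \tilde{u})((x - y)/\rho)$, whence $\sup_{B_{\rho/2}(y)} \rho^j |D^j u| \leq C_j(n,N)$; in particular $u$ is smooth on $B_{\rho/2}(y)$, so $y \in \text{reg}(u)$.

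The main obstacle is of course the $\epsilon$-regularity theorem itself, whose proof --- via Luckhaus' Lemma, a harmonic-approximation scheme, and a Morrey-type decay iteration on the monotone quantity $\rho^{2-n}\int_{B_\rho(y)}|Du|^2$ controlled by Theorem~1 --- is the technically heaviest piece of regularity theory for harmonic maps and is invoked here as a black box rather than re-derived. Once it is granted, the higher-order derivative bounds are a consequence of standard elliptic regularity applied to the harmonic-map system satisfied by $\tilde{u}$, and the scale-invariance of the quantity $\rho^{2-n}\int_{B_\rho(y)}|Du|^2$ is precisely what makes the passage between scales effortless.
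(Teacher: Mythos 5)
Your proposal is correct, but it takes a different route from the paper, and in fact a more complete one. The paper's own proof of this corollary is a single line: it applies the Poincar\'e inequality to conclude $\inf_{\lambda}\rho^{-n}\int_{B_\rho(y)}|u-\lambda|^2 \leq C\rho^{2-n}\int_{B_\rho(y)}|Du|^2 \leq C\epsilon$ and stops there. That inequality shows only that $u$ is $L^2$-close to a constant at scale $\rho$; by itself it does not yield smoothness of $u$ near $y$, let alone the interior bounds $\sup_{B_{\rho/2}(y)}\rho^j|D^ju|\leq C$. It is really just the first ingredient of the harmonic-approximation and energy-decay scheme (Luckhaus' Lemma, comparison with a harmonic map, iteration, Morrey's lemma, elliptic bootstrapping) that your black-boxed $\epsilon$-regularity theorem encapsulates. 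You instead treat the corollary as the scale-invariant form of the unit-scale $\epsilon$-regularity theorem and derive it by dilation: your verification that $\tilde{u}(x)=u(y+\rho x)$ is again minimizing, that $\int_{B_1}|D\tilde{u}|^2 = \rho^{2-n}\int_{B_\rho(y)}|Du|^2$, and that the chain rule converts $|D^j\tilde{u}|\leq C_j$ into $\rho^j|D^ju|\leq C_j$ is all correct and is exactly the step the paper leaves implicit. What the paper's version buys is brevity and a pointer to the analytic mechanism (small scaled energy controls oscillation); what yours buys is a logically closed argument in which the cited result actually contains the conclusion, at the cost of deferring all the hard analysis to the $\epsilon$-regularity theorem --- which is unavoidable here, since the corollary essentially \emph{is} that theorem.
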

\begin{proof}
    The Poincare Inequality tells us that
    \[
    \inf_{\lambda \in \mathbb{R}^p} \rho^{-n} \int_{B_\rho (y)} |u-\lambda|^2 \leq C\rho^{2-n} \int_{B_{\rho} (y)} |Du|^2 \leq C \epsilon
    \]
    directly.
\end{proof}
\begin{corollary}{2}
    $\Theta_{u}(y)=0 \iff y \in \text{reg}(u)$
\end{corollary}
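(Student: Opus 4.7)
The plan is to prove both implications separately, relying on Corollary 1 for the forward direction and a direct estimate for the reverse direction.

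For the forward direction ($\Theta_u(y) = 0 \Rightarrow y \in \mathrm{reg}(u)$), I would invoke Corollary 1 directly. Let $\epsilon > 0$ be the constant produced by Corollary 1 (depending only on $n$ and $N$). Since $\Theta_u(y) = 0$ by hypothesis and $\Theta_u(y) = \lim_{\rho \downarrow 0} \rho^{2-n} \int_{B_\rho(y)} |Du|^2$, there exists some sufficiently small $\rho_0 > 0$ with $\bar B_{\rho_0}(y) \subset \Omega$ such that $\rho_0^{2-n} \int_{B_{\rho_0}(y)} |Du|^2 < \epsilon$. Corollary 1 then applies verbatim to yield $y \in \mathrm{reg}(u)$.

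For the reverse direction ($y \in \mathrm{reg}(u) \Rightarrow \Theta_u(y) = 0$), I would argue by a volume estimate. By definition of $\mathrm{reg}(u)$, there is a ball $B_{r_0}(y) \subset \Omega$ on which $u$ is $C^\infty$, so $|Du|^2$ is continuous and hence bounded on the closed smaller ball $\bar B_{r_0/2}(y)$ by some constant $M$. For every $\rho \in (0, r_0/2)$ one then has
\[
\rho^{2-n}\int_{B_\rho(y)} |Du|^2 \;\leq\; \rho^{2-n}\cdot M \cdot \omega_n \rho^n \;=\; M\,\omega_n\,\rho^2,
\]
which tends to $0$ as $\rho \downarrow 0$. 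Taking the limit gives $\Theta_u(y) = 0$.

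The proof is almost mechanical once Corollary 1 is in hand, so I do not expect any serious obstacle; the only subtlety worth flagging is that Corollary 1 states that a single sufficiently small radius $\rho$ with the energy bound already forces regularity at $y$, which is exactly what the hypothesis $\Theta_u(y) = 0$ delivers via the definition of the limit. The reverse direction is a straightforward consequence of local boundedness of $|Du|^2$ and of the factor $\rho^{2-n}\cdot\rho^n = \rho^2$, requiring no appeal to the monotonicity formula beyond what already justifies the existence of $\Theta_u(y)$.
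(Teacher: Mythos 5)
Your proposal is correct and follows essentially the same route as the paper: Corollary 1 for the direction $\Theta_u(y)=0 \Rightarrow y \in \mathrm{reg}(u)$, and local boundedness of $|Du|$ together with the scaling factor $\rho^{2-n}\cdot\rho^n = \rho^2$ for the converse. If anything, your write-up of the converse is the more precise one: the paper's phrasing ``$|Du| < \delta$ for all $\delta > 0$'' would literally force $Du \equiv 0$, whereas your fixed bound $M$ on $\bar B_{r_0/2}(y)$ is what the argument actually needs.
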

\begin{proof}
    The '$\impliedby$' direction follows from the fact that if $u$ is smooth in a neighborhood of $y$ ,the $|Du|$ is bounded near $y$ that is 
    \[
    |Du| < \delta
    \]
    for all $\delta>0$ in a neighborhood of $y$ and hence
    \[
    \Theta_u(y)=\lim_{\rho \downarrow 0} \rho^{2-n} \int_{B_{\rho}(y)} |Du|^2 < \epsilon
    \]
    for all $\epsilon > 0$ for the appropriate choice of $\delta$, and taking $\epsilon \downarrow 0$ we have
    \[
    \Theta_u(y)=0
    \]
    The '$\implies$' direction is a consequence of Corollary 1, that is
    \[
    \Theta_u(y)=0 \implies \lim_{\rho \downarrow 0} \rho^{2-n} \int_{B_{\rho}(y)} |Du|^2 < \epsilon 
    \]
    for all $\epsilon >0$, so $y \in \text{reg}(u)$ naturally follows.
\end{proof}
    
One natural question that arises is: what are the geometric properties of reg(u) and sing(u)? Moreover, how do these sets vary depending on the choice of $u$ as an energy minimizing map. As a result, sing(u) is of great interests to geometric analysts due to its seemingly recondite properties. Topics of interest pertaining to sing(u) include its smoothness/real-analyticity, Hausdorff Measure, and the case where $u$ embeds onto some n-sphere (see \textit{Singular Sets of Energy Minimizing Maps} by Fanghua Lin (1991, p. 165)\cite{lin1991singular}).

\subsection{Tangent Maps}
First, let us begin with a lemma about compactness:
\begin{lemma}{1 (The Compactness Theorem)}
    If $\{ u_j \}$ is a sequence of energy minimizing maps in $W^{1,2}(\Omega; N)$ with $\sup_j \int_{B_\rho (Y)} |Du_j|^2 < \infty$ for each ball $B_\rho (Y)$ with $\bar{B}_{\rho}(Y) \subset \Omega$, then there exists a subsequence $\{ u_{j'} \}$ and a minimizing harmonic map $u \in W^{1,2}(\Omega; N)$ such that $u_{j'} \rightarrow u$ in $W^{1,2}(B_\rho (y); \mathbb{R}^p)$ on each ball $\bar{B}_{\rho}(y) \subset \Omega$
\end{lemma}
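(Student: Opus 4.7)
The plan is to first extract a weakly convergent $W^{1,2}_\mathrm{loc}$ subsequence by standard compactness, then upgrade weak convergence to strong $W^{1,2}$ convergence using the energy-minimizing hypothesis, and finally verify that the limit is itself energy minimizing. All three steps rely on carefully chosen competitor maps, with Luckhaus' Lemma serving as the central technical tool for bridging boundary data via $N$-valued transitions. For the first step, fix a ball $B_\rho(y)$ with $\bar{B}_\rho(y) \subset \Omega$. Since $N$ is compact, each $u_j$ is uniformly bounded in $L^\infty$, and combined with the hypothesis this yields a uniform $W^{1,2}(B_\rho(y); \mathbb{R}^p)$ bound. Reflexivity together with Rellich--Kondrachov produce a subsequence $u_{j'}$ with $u_{j'} \rightharpoonup u$ weakly in $W^{1,2}$, $u_{j'} \to u$ strongly in $L^2$, and $u_{j'} \to u$ pointwise a.e.\ after a further extraction. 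A diagonal argument across an exhaustion of $\Omega$ gives a single subsequence that works locally on all of $\Omega$, and a.e.\ convergence combined with $u_{j'}(x) \in N$ forces $u(x) \in N$ a.e., so $u \in W^{1,2}_\mathrm{loc}(\Omega; N)$.

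Next I would promote weak convergence to norm convergence on each such ball. Since $W^{1,2}$ is a Hilbert space, weak convergence plus convergence of norms implies strong convergence, so it suffices to show $\lim_{j'} \int_{B_\rho(y)} |Du_{j'}|^2 = \int_{B_\rho(y)} |Du|^2$. Weak lower semi-continuity supplies the $\leq$ direction automatically. For the opposite inequality, for each small $\eta > 0$ I would invoke Luckhaus' Lemma to construct comparison maps $v_{j'} \in W^{1,2}(B_\rho(y); N)$ that equal $u_{j'}$ on the outer annulus $B_\rho(y) \setminus B_{\rho-\eta}(y)$, equal $u$ on $B_{\rho-2\eta}(y)$, and interpolate between them on the thin intermediate shell, with transitional energy controlled by $\|u_{j'} - u\|_{L^2}^2$ on a well-chosen spherical slice (selected via Fubini) together with the $|Du_{j'}|^2$ and $|Du|^2$ energies on that slice, all of which vanish along a subsequence as $j' \to \infty$. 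Plugging $v_{j'}$ into the minimizing inequality for $u_{j'}$ and sending $j' \to \infty$ and then $\eta \to 0$ gives $\limsup_{j'} \int_{B_\rho(y)} |Du_{j'}|^2 \leq \int_{B_\rho(y)} |Du|^2$.

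Finally I would show that $u$ is itself energy minimizing. Let $w \in W^{1,2}(B_\rho(y); N)$ with $w = u$ in a neighborhood of $\partial B_\rho(y)$. For each $j'$ and small $\eta$, Luckhaus' Lemma again produces a competitor $w_{j'}$ equal to $w$ on a slightly smaller concentric ball and equal to $u_{j'}$ near $\partial B_\rho(y)$, with transitional cost controlled by $\|u_{j'} - u\|_{L^2}$ on a suitable spherical slice. Minimality of $u_{j'}$ gives $\int_{B_\rho(y)} |Du_{j'}|^2 \leq \int_{B_\rho(y)} |Dw_{j'}|^2$; passing $j' \to \infty$ using the strong convergence established in the previous step on the left and the vanishing transitional cost on the right, then shrinking $\eta$, yields $\int_{B_\rho(y)} |Du|^2 \leq \int_{B_\rho(y)} |Dw|^2$, so $u$ is energy minimizing on every such ball.

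The main obstacle is unquestionably the Luckhaus bridging construction itself. Naive linear interpolation $(1-t)u_{j'} + tu$ between two $N$-valued maps generally leaves the target manifold, so one cannot simply average in $\mathbb{R}^p$. Luckhaus' Lemma circumvents this by slicing the annular shell into spherical layers, interpolating at the $W^{1,2}(S^{n-1})$ level, and then applying the nearest-point retraction onto $N$, which is well-defined in a tubular neighborhood because $N$ is a smooth compact submanifold of $\mathbb{R}^p$, all while quantitatively accounting for the energy loss in terms of the $L^2$ distance of the boundary traces. Once this technical lemma is granted, the remainder of the argument is weak lower semi-continuity plus routine functional-analytic bookkeeping, and the same competitor construction is reused for both the norm-convergence upgrade and the verification of minimality.
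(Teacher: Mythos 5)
The paper does not actually prove this lemma --- it states that the proof ``is quite long and will not be given here'' and defers to Simon's book --- so there is nothing internal to compare against. Your outline correctly reproduces the standard Schoen--Uhlenbeck/Luckhaus argument from that reference: weak $W^{1,2}_{\mathrm{loc}}$ compactness plus a diagonal extraction, the Luckhaus gluing to beat weak lower semi-continuity into norm convergence, and the same competitor construction to transfer minimality to the limit. The one place where your sketch is genuinely only a sketch is the Luckhaus step itself: you must also argue that the interpolating map on the thin shell stays within the tubular neighborhood of $N$ before the nearest-point retraction is legitimate, which requires a quantitative smallness estimate (in terms of $\eta$, the slice energies, and $\|u_{j'}-u\|_{L^2}$) and a careful ordering of the limits in $j'$ and $\eta$; granting that, the proposal is sound.
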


The proof of this lemma is quite long and will not be given here.

Given $u:\Omega \rightarrow \mathbb{R}^p$ and $B_{\rho_0}(y)$ such that $\bar{B}_{\rho_0}(y) \subset \Omega$, and for any $\rho>0$, consider the scaling function $u_{y,\rho}$ given by 
\[
u_{y,\rho}(x)=u(y+\rho x)
\]
Note that on $B_{\rho_0}(0)$, $u_{y,\rho}$ is well defined. For $\sigma > 0$ and $\rho < \frac{\rho_0}{\sigma}$, after making a change of variables with $\Tilde{x}=y+\rho x$ in the energy integral for $u_{y,\rho}$ and noting that $Du_{y,\rho}(x)=\rho(Du)(y+\rho x)$, we have
\begin{align}
\sigma^{2-n} \int_{B_\sigma(0)} |Du_{y,\rho}|^2 = (\sigma \rho)^{2-n} \int_{B_{\sigma \rho}(y)} |Du|^2 \leq \rho_0^{2-n} \int_{B_{\rho_0}(y)} |Du|^2
\end{align}
by the Monotonicity Formula (1.3). Therefore, if $\rho_j \downarrow 0$, then $\limsup_{j \rightarrow \infty} \int_{B_{\sigma}(0)} |Du_{y,\rho_j}|^2 < \infty$ for all $\sigma > 0$, and so by the Compactness Theorem, there is a subsequence $\rho_{j'}$ such that $u_{y,\rho_{j'}} \rightarrow \phi$ locally in $\mathbb{R}^n$ w.r.t. the $W^{1,2}$-norm.

\begin{definition}{10 (Tangent Map)}
    Any $\phi$ obtained this way is called a \textit{tangent map of u at y}. Moreover, $\phi: \mathbb{R}^n \rightarrow N$ is an energy minimizing map with $\Omega = \mathbb{R}^n$.
\end{definition}
Now we will discuss some of the properties of Tangent maps. 

Let $\rho_j \downarrow 0$ be a sequence such that the scaled map $u_{y,\rho_j} \rightarrow \phi$. Then $u_{y,\rho_j}$ converges in energy to $\phi$, so after setting $\rho=\rho_j$ and taking limits on both sides of (1) as $j \rightarrow \infty$,
\[
\sigma^{2-n} \int_{B_\sigma(0)} |D\phi|^2 = \Theta_u(y)
\]
where $\Theta_u(y)$ is the density function, since definitionally, $\Theta_u(y)=\lim_{\rho \downarrow 0} \rho^{2-n} \int_{B_{\rho}(y)} |Du|^2$. In fact, we have that $\sigma^{2-n} \int_{B_\sigma(0)} |D\phi|^2$ is a constant function of $\sigma$, and by definition $\Theta_{\phi}(0)=\lim_{\sigma \downarrow 0} \sigma^{2-n} \int_{B_\sigma (0)} |Du|^2$, so
\[
\Theta_u(y)=\Theta_\phi (0)= \int_{B_\sigma (0)} |Du|^2 \hspace{0.5 cm} \forall \sigma>0
\]
Therefore any tangent map of $u$ at $y$ has a constant scaled energy which is equal to the density of $u$ at $y$.
Applying the Monotonicity Formula (1.3) to $\phi$, we have 
\begin{align*}
0=\sigma^{2-n}\int_{B_\sigma (0)}|D\phi|^2-\tau^{2-n}\int_{B_\tau (0)}|D\phi|^2 &= \int_{B_\sigma (0) \setminus B_\tau (0)} R^{2-n} |\frac{\partial \phi}{\partial R}|^2 \\
\implies \frac{\partial \phi}{\partial R} &= 0 \hspace{0.25 cm} a.e.
\end{align*}
Now since $\phi \in W_{\text{loc}}^{1,2}(\mathbb{R}^n;\mathbb{R}^p)$, we obtain by integration along rays in $\mathbb{R}^n$ the key property of tangent maps that
\[
\phi(\lambda x)=\phi(x) \hspace{0.5cm} \forall \lambda>0, x \in \mathbb{R}^n
\]

The next section will explore further properties of tangent maps as homogeneous degree zero minimizers. 

\begin{corollary}{1}
    $y \in \text{reg}(u) \iff \exists$ a constant tangent map $\phi$ of u at y
\end{corollary}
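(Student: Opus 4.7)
The plan is to reduce the statement to Corollary 2 (which identifies $\text{reg}(u)$ with the vanishing locus of $\Theta_u$) by exploiting the identity
\[
\sigma^{2-n} \int_{B_\sigma(0)} |D\phi|^2 = \Theta_u(y) \qquad \forall \sigma > 0
\]
established just before the corollary statement, which holds for any tangent map $\phi$ of $u$ at $y$.

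For the backward direction, I would suppose that a constant tangent map $\phi$ of $u$ at $y$ exists. Then $D\phi \equiv 0$ on $\mathbb{R}^n$, so the left-hand side of the identity vanishes for every $\sigma > 0$, forcing $\Theta_u(y) = 0$. Corollary 2 then immediately yields $y \in \text{reg}(u)$.

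For the forward direction, I would argue in two steps. First, existence of \emph{some} tangent map at $y$: for any sequence $\rho_j \downarrow 0$, the scaled maps $u_{y,\rho_j}$ have locally uniformly bounded energy by the bound (1) in the excerpt, so the Compactness Theorem (Lemma 1) yields a subsequence converging in $W^{1,2}_{\text{loc}}$ to an energy minimizing map $\phi$, which by Definition 10 is a tangent map of $u$ at $y$. Second, if $y \in \text{reg}(u)$, then Corollary 2 gives $\Theta_u(y) = 0$, and plugging this into the identity above yields $\int_{B_\sigma(0)}|D\phi|^2 = 0$ for every $\sigma > 0$. Hence $D\phi = 0$ almost everywhere on $\mathbb{R}^n$; since $\phi \in W^{1,2}_{\text{loc}}(\mathbb{R}^n;\mathbb{R}^p)$ and $\mathbb{R}^n$ is connected (indeed $n \geq 2$), this forces $\phi$ to be constant a.e., producing the required constant tangent map.

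There is no real obstacle here: the proof is essentially a bookkeeping exercise once the two prior results are in hand. The only subtle point worth emphasizing is that the forward direction requires invoking the Compactness Theorem to guarantee that at least one tangent map exists in the first place — the corollary is not a statement about uniqueness of tangent maps, and in fact the argument shows the stronger conclusion that \emph{every} tangent map at a regular point is constant.
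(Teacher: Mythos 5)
Your proof is correct and follows essentially the same route as the paper's: both reduce the statement to Corollary 2 via the identity $\Theta_u(y) = \Theta_\phi(0) = \sigma^{2-n}\int_{B_\sigma(0)}|D\phi|^2$. Your version is more careful than the paper's one-line chain of equivalences --- in particular you rightly note that the forward direction needs the Compactness Theorem to guarantee a tangent map exists at all, a point the paper's proof leaves implicit.
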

\begin{proof}
    By corollary 2 of Section 2.3:
    \[
    y \in \text{reg} \iff \Theta_u (y)=0 \iff \Theta_\phi(0)=0 \iff \phi \equiv \alpha
    \]
    for some constant $\alpha$.
\end{proof}

\subsection{Homogeneous Degree Zero Minimizers}
Define $\phi: \mathbb{R}^n \rightarrow N$ to be a degree zero minimizer, that is, $\phi(x)=\phi(\lambda x)$ for all $\lambda>0$, $x \in \mathbb{R}^n$

\begin{theorem}{4 (Maximum of the Density Function for D.Z.M.'s)}
    $\Theta_\phi (y)$ attains its maximum at $y=0$.
\end{theorem}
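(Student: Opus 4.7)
The plan is to combine two ingredients: the monotonicity formula applied at the arbitrary point $y$, and the exact scale invariance at the origin that follows from $\phi$ being homogeneous of degree zero. Since $\phi$ is defined on all of $\mathbb{R}^n$, the monotonicity formula applies for every radius $\rho > 0$, which is what will allow us to send $\rho \to \infty$ in the comparison.

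First, I would apply Theorem 1 to $\phi$ centered at $y$ and let the inner radius $\sigma \downarrow 0$. Since the right-hand side $2\int_{B_\rho(y)\setminus B_\sigma(y)} R^{2-n}\lvert \partial \phi/\partial R\rvert^2$ is nonnegative, this yields
$$\Theta_\phi(y) \;\leq\; \rho^{2-n}\int_{B_\rho(y)} \lvert D\phi \rvert^2 \qquad \text{for every } \rho > 0.$$
Next, using the trivial inclusion $B_\rho(y) \subset B_{\rho + |y|}(0)$, I would bound the integral on the right by the integral over the concentric ball about the origin of radius $\rho + |y|$. The homogeneity of $\phi$ (recorded in Section 2.4) makes $\sigma^{2-n}\int_{B_\sigma(0)}\lvert D\phi \rvert^2$ constant in $\sigma$ and equal to $\Theta_\phi(0)$; equivalently, the change of variables $x = \sigma z$ together with the identity $\lvert D\phi(\sigma z)\rvert^2 = \sigma^{-2}\lvert D\phi(z)\rvert^2$ (obtained by differentiating $\phi(\sigma z) = \phi(z)$) gives $\int_{B_\sigma(0)}\lvert D\phi\rvert^2 = \sigma^{n-2}\,\Theta_\phi(0)$. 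Applying this with $\sigma = \rho + |y|$ yields
$$\Theta_\phi(y) \;\leq\; \rho^{2-n}(\rho + |y|)^{n-2}\,\Theta_\phi(0) \;=\; \left(1 + \frac{|y|}{\rho}\right)^{\!n-2} \Theta_\phi(0).$$

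To conclude, I would send $\rho \to \infty$, so the prefactor $(1 + |y|/\rho)^{n-2}$ tends to $1$, leaving $\Theta_\phi(y) \leq \Theta_\phi(0)$, as required.

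The only real subtlety is conceptual rather than computational: one must resist applying the homogeneity identity at $y \neq 0$, where it need not hold, since $\phi$ is scale-invariant only about the origin. The argument instead splices together two facts at two different base points, with the inclusion $B_\rho(y) \subset B_{\rho + |y|}(0)$ serving as the bridge and the limit $\rho \to \infty$ washing out the shift. The applicability of monotonicity at arbitrarily large $\rho$, guaranteed because $\phi$ is an energy minimizing map defined on all of $\mathbb{R}^n$, is what makes this comparison possible in the first place.
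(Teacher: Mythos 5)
Your proposal is correct and follows essentially the same route as the paper: monotonicity at $y$ to get $\Theta_\phi(y) \leq \rho^{2-n}\int_{B_\rho(y)}|D\phi|^2$, the inclusion $B_\rho(y) \subset B_{\rho+|y|}(0)$, the homogeneity identity $\tau^{2-n}\int_{B_\tau(0)}|D\phi|^2 = \Theta_\phi(0)$, and the limit $\rho \to \infty$. The only cosmetic difference is that you discard the nonnegative radial-derivative term immediately rather than carrying it to the final line, and your explicit justification of the scaling identity via $\lvert D\phi(\sigma z)\rvert^2 = \sigma^{-2}\lvert D\phi(z)\rvert^2$ is a welcome addition.
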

\begin{proof}
    By the monotonicity formula, for each $\rho>0$ and $y \in \mathbb{R}^n$
    \[
    2\int_{B_\rho (y)} R_y^{2-n} |\frac{\partial \phi}{\partial R_y}|^2+\Theta_\phi (y)=\rho^{2-n} \int_{B_\rho (y)} |D \phi|^2
    \]
    Here, $R_y(x)=|x-y|$ and $\frac{\partial}{\partial R_y}=|x-y|^{-1}(x-y)D$. Given that $B_\rho (y) \subset B_{\rho+|y|}(0)$, so that
    \begin{align*}
\int_{B_\rho(y)} |D\phi|^2 &\leq \rho^{2-n} \int_{B_{\rho+|y|}(0)} |D\phi|^2 \\
&= \left(1+\frac{|y|}{\rho}\right)^{n-2} (\rho+|y|)^{2-n} \int_{B_{\rho+|y|}(0)} |D\phi|^2 \\
&= \left(1+\frac{|y|}{\rho}\right)^{n-2} \Theta_\phi(0),
\end{align*}
since $\phi$ is homogeneous of degree zero, which ensures that $\tau^{2-n}\int_{B_\tau (0)}|D\phi|^2=\Theta_{\phi}(0)$. Therefore, letting $\rho \uparrow \infty$, we get that
\[
2 \int_{\mathbb{R}^n} R_y^{2-n} \left| \frac{\partial\phi}{\partial R_y} \right|^2 + \Theta_\phi(y) \leq \Theta_\phi(0)
\]
and so we have the desired inequality
\[
\Theta_{\phi}(y) \leq \Theta_{\phi}(0)
\]
and so the maximum of $\Theta_\phi$ is attained at $y=0$.
\end{proof}
Note that equality is established when $\frac{\partial\phi}{\partial R_y}=0$, that is $\phi(y+\lambda x)=\phi(y+x)$ for all $\lambda>0$
\begin{align*}
\phi(x) &= \phi(\lambda x) = \phi(y + (\lambda x - y)) = \phi(y + \lambda^{-1}(\lambda x - y)) \\
&= \phi(\lambda(y + \lambda^{-1}(\lambda x - y))) = \phi(x + y),
\end{align*}
where $t=\lambda-\lambda^{-1}$ is some arbitrary real number. Now we define the set 
\[
S(\phi)=\{y \in \mathbb{R}^n: \Theta_\phi (y)=\Theta_\phi (0) \}
\]

This gives way to the following theorem
\begin{theorem}{5 (The set $S(\phi)$ as a linear subspace)}
    $S(\phi)$ is a linear subspace of $\mathbb{R}^n$ and $\phi(x+y)=\phi(x)$ for $x \in \mathbb{R}^n$ and $y \in S(\phi)$
\end{theorem}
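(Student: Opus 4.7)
The plan is to prove the theorem in two stages: first establish the translation invariance claim $\phi(x+y) = \phi(x)$, then use it to deduce that $S(\phi)$ is a linear subspace. For the translation invariance, I would build directly on the remark preceding the theorem statement. That remark observes that equality $\Theta_\phi(y) = \Theta_\phi(0)$ forces the radial derivative $\partial\phi/\partial R_y$ to vanish a.e. — this comes from making the inequalities in the proof of Theorem 4 into equalities and invoking the monotonicity identity. Consequently $\phi$ is constant along every ray emanating from $y$, i.e., $\phi(y + \lambda x) = \phi(y + x)$ for all $\lambda > 0$. Combining this radial condition from $y$ with degree-zero homogeneity from $0$ (so that $\phi$ is also constant along rays from the origin), a short algebraic manipulation — applying one scaling after the other, as sketched in the remark — yields $\phi(x + y) = \phi(x)$ for all $x \in \mathbb{R}^n$.

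For the subspace claim, I would first promote what was just proved to a characterization: $y \in S(\phi)$ if and only if $\phi(\cdot + y) \equiv \phi(\cdot)$. The forward implication is exactly the translation invariance from stage one. The reverse implication uses translation invariance of the integrand: if $\phi(\cdot + y) = \phi(\cdot)$, then $|D\phi|^2$ is also invariant under translation by $y$, and a change of variables gives $\int_{B_\rho(y)} |D\phi|^2 = \int_{B_\rho(0)} |D\phi|^2$ for every $\rho > 0$, so that $\Theta_\phi(y) = \Theta_\phi(0)$.

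With $S(\phi)$ characterized as the set of vectors along which $\phi$ is translation-invariant, the subspace properties reduce to routine checks. Clearly $0 \in S(\phi)$. Closure under addition follows by iterating the translation invariance: $\phi(x + y_1 + y_2) = \phi((x + y_2) + y_1) = \phi(x + y_2) = \phi(x)$. Closure under positive scalar multiplication exploits degree-zero homogeneity: for $\lambda > 0$, $\phi(x + \lambda y) = \phi(\lambda^{-1}(x + \lambda y)) = \phi(\lambda^{-1} x + y) = \phi(\lambda^{-1} x) = \phi(x)$. To handle negative scalars, first note that substituting $x \mapsto x - y$ into $\phi(x + y) = \phi(x)$ yields $\phi(x + (-y)) = \phi(x)$, so $-y \in S(\phi)$; then apply the positive-scalar case to conclude $\lambda y \in S(\phi)$ for every $\lambda < 0$.

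I expect the main obstacle to be completing the translation-invariance identity in the first stage cleanly. The remark's hint that $t = \lambda - \lambda^{-1}$ runs over all real values suggests the proof should proceed by explicit algebraic substitution, but care is needed: naively combining the two radial conditions produces $\phi(x + ty) = \phi(x)$ only for $t$ in a restricted range such as $(-\infty, 1)$, and one must either choose the parameterization carefully or iterate the identity to extend to all of $\mathbb{R}$. Once that algebraic identity is secured, the translation-invariance characterization of $S(\phi)$ and all the subspace checks fall out by the manipulations sketched above.
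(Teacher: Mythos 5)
Your proposal is correct and follows essentially the same route as the paper: derive $\phi(x+ty)=\phi(x)$ for all real $t$ by combining radial constancy about $y$ with degree-zero homogeneity, then obtain closure of $S(\phi)$ under linear combinations via the converse observation that translation invariance of $\phi$ forces $\Theta_\phi(y)=\Theta_\phi(0)$. If anything, your change-of-variables justification of that converse step is more careful than the paper's, which asserts it merely ``by taking $x$ to be $0$.''
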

\begin{proof}
    For $\phi$ a degree zero minimizer, we have that $\phi(x)=\phi(x+ty)$ for all $x \in \mathbb{R}^n$, $t \in \mathbb{R}$ and $y \in S(\phi)$. From this, we obtain
    \[
    \phi(x+az_1+bz_2)=\phi(x)
    \]
    for all $a,b \in \mathbb{R}$ and $z_1, z_2 \in S(\phi)$. However, if $z \in \mathbb{R}^n$ and $\phi(x+z)=\phi(x)$ for every $x \in \mathbb{R}^n$, then $\Theta_\phi (z)=\Theta_\phi(0)$ by taking $x$ to be 0, and hence $z \in S(\phi)$
\end{proof}
For this, we have the immediate corollary:
\begin{corollary}{2}
    $\dim S(\phi)=n \iff S(\phi)=\mathbb{R}^n \iff \phi = \text{const}$
\end{corollary}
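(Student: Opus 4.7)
The plan is to chain together the two biconditionals, relying entirely on Theorem 5 and a minor density computation; no new ingredients are needed, which is precisely why this is stated as a corollary.

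For the first biconditional $\dim S(\phi)=n \iff S(\phi)=\mathbb{R}^n$, I would invoke Theorem 5, which established that $S(\phi)$ is a linear subspace of $\mathbb{R}^n$. A linear subspace equals the ambient space precisely when its dimension matches the ambient dimension, so this direction is a one-line appeal to elementary linear algebra once Theorem 5 is in hand.

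For the second biconditional $S(\phi)=\mathbb{R}^n \iff \phi=\text{const}$, I would argue both directions separately. For the forward implication, Theorem 5 guarantees that $\phi(x+y)=\phi(x)$ for every $x\in\mathbb{R}^n$ and every $y\in S(\phi)$; assuming $S(\phi)=\mathbb{R}^n$ and specializing to $x=0$ immediately gives $\phi(y)=\phi(0)$ for all $y\in\mathbb{R}^n$, whence $\phi$ is constant. For the reverse implication, if $\phi\equiv \alpha$ then $D\phi=0$ a.e., so $\rho^{2-n}\int_{B_\rho(y)}|D\phi|^2=0$ for every $y$ and every $\rho>0$; passing to the limit $\rho\downarrow 0$ yields $\Theta_\phi(y)=0=\Theta_\phi(0)$ for all $y\in\mathbb{R}^n$, so $S(\phi)=\mathbb{R}^n$ by definition.

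Since all the structural work was absorbed into Theorem 5, there is essentially no obstacle here; the only subtlety worth flagging is that the forward direction of the second biconditional relies on the translation-invariance statement of Theorem 5 rather than on the density definition of $S(\phi)$ itself, so one should be careful to cite Theorem 5 rather than attempting to manipulate $\Theta_\phi$ directly.
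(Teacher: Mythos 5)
Your proof is correct and is exactly the argument the paper has in mind: the paper states this corollary without proof as ``immediate'' from Theorem 5, and your chain --- linear-subspace dimension count for the first equivalence, the translation invariance $\phi(x+y)=\phi(x)$ for the forward direction of the second, and $D\phi=0$ a.e.\ forcing $\Theta_\phi\equiv 0$ for the reverse --- is the intended filling-in of that gap. No issues.
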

Moreover, a nonconstant homogeneous degree zero map cannot be continuous at 0, and so we always have $0 \in \text{sing}(u)$. Since $\phi$ is assumed to be nonconstant, and $\phi(x+z)=\phi(x)$ for any $z \in S(\phi)$, we have
\[
S(\phi) \subset \text{sing}(u)
\]
for $\phi$ nonconstant D.Z.M.

\section{The Properties and Geometric Picture of sing(u)}
\subsection{A Salient Theorem}
Here we will unveil what is perhaps the foremost geometric property of sing(u).
\begin{theorem}{6 (sing(u) Consists Only of Isolated Points)}
    If $u \in W^{1,2}(\Omega)$ is energy minimizing in $\Omega$, then $H^{n-2}(\text{sing}(u))=0$ (in particular, sing(u)=$\emptyset$ if $n=2$)
\end{theorem}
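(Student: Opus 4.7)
The plan is to use Federer's dimension reduction argument, built around the tangent-map machinery and the $\epsilon$-regularity corollary of Section 3.3. I would first dispose of the base case $n=2$. Any homogeneous degree zero map $\phi:\mathbb{R}^2 \to N$ satisfies $|D\phi(r\theta)|^2 = r^{-2}|\partial_\theta \phi(\theta)|^2$, so
\[
\int_{B_1 \setminus B_\epsilon}|D\phi|^2 \;=\; \log(1/\epsilon)\int_{S^1}|\partial_\theta \phi|^2,
\]
which diverges as $\epsilon \downarrow 0$ unless $\phi$ is constant on $S^1$, and hence everywhere. So every tangent map of $u$ is constant when $n=2$, and by Corollary 1 of Section 3.4 every point of $\Omega$ lies in $\text{reg}(u)$, giving $\text{sing}(u) = \emptyset$.

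For $n \geq 3$ I would argue by contradiction and assume $H^{n-2}(\text{sing}(u))>0$. Standard density theorems for Hausdorff measure produce $y_0 \in \text{sing}(u)$ and $\rho_j \downarrow 0$ along which the $(n-2)$-dimensional density of $\text{sing}(u)$ in $B_{\rho_j}(y_0)$ stays bounded below by a positive constant. Forming the rescalings $u_{y_0,\rho_j}$, by estimate (1) and the Compactness Theorem (Lemma 1) some subsequence converges in $W^{1,2}_{\text{loc}}$ to a tangent map $\phi$, itself energy minimizing and homogeneous of degree zero.

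The key technical step is to transfer the $H^{n-2}$ lower bound to $\text{sing}(\phi)$. Let $x$ be a limit of $x_j \in \rho_j^{-1}(\text{sing}(u)-y_0)$. Since each $x_j$ is singular for $u_{y_0,\rho_j}$, the contrapositive of the $\epsilon$-regularity Corollary 1 of Section 3.3 gives $\sigma^{2-n}\int_{B_\sigma(x_j)}|Du_{y_0,\rho_j}|^2 \geq \epsilon$ at every scale $\sigma > 0$, and strong $W^{1,2}_{\text{loc}}$-convergence passes this bound to $\sigma^{2-n}\int_{B_\sigma(x)}|D\phi|^2 \geq \epsilon$; sending $\sigma \downarrow 0$ yields $\Theta_\phi(x) \geq \epsilon > 0$, so $x \in \text{sing}(\phi)$ by Corollary 2 of Section 3.3. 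Standard lower semi-continuity of $H^{n-2}$ under Hausdorff convergence of closed sets then produces $H^{n-2}(\text{sing}(\phi) \cap B_1)>0$. Now I iterate: $\text{sing}(\phi)$ is a cone from the origin and $S(\phi)\subset\text{sing}(\phi)$ is a linear subspace (Theorem 5); if there exists a singular point $z_0 \notin S(\phi)$ of positive $(n-2)$-density, I blow up at $z_0$ to obtain a further tangent map $\psi$ that inherits all translations in $S(\phi)$ and, from the radial ray through $z_0$, acquires an additional invariant direction, so $\dim S(\psi) > \dim S(\phi)$. Finitely many iterations produce a tangent map $\Psi$ with $\dim S(\Psi) \geq n-2$: the cases $\dim S(\Psi) \in \{n-1, n\}$ force $\Psi$ constant (Corollary 2 of Section 3.5), while $\dim S(\Psi) = n-2$ reduces $\Psi$ to a nonconstant homogeneous degree zero minimizer on the orthogonal $2$-plane, ruled out by the base case.

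The principal obstacle is the transfer step in the third paragraph: the passage of singular points and of $H^{n-2}$-mass through blow-up hinges on the careful interplay of $\epsilon$-regularity with strong $W^{1,2}_{\text{loc}}$-convergence. A secondary difficulty is the Federer iteration itself, which must be set up so that each blow-up strictly enlarges $S$; this requires choosing the new basepoint off the previous spine so that the subsequent tangent map acquires a genuinely new invariant direction.
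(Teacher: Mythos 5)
Your route is genuinely different from the paper's. The paper proves Theorem 6 with a short, self-contained packing argument: for each $y\in\mathrm{sing}(u)\cap K$ the $\epsilon$-regularity corollary gives $\int_{B_\rho(y)}|Du|^2\geq\epsilon\rho^{n-2}$; taking a maximal disjoint family of balls $B_{\delta/2}(y_j)$ centered in $\mathrm{sing}(u)\cap K$ and summing these lower bounds over the disjoint balls bounds the number $J$ of balls by $J\delta^{n-2}\leq 2^n\epsilon^{-1}\int_{Q_\delta}|Du|^2$, where $Q_\delta$ is the $\delta$-neighbourhood of $\mathrm{sing}(u)\cap K$; multiplying by $\delta^2$ first shows $\mathrm{sing}(u)\cap K$ is Lebesgue-null, whence $\int_{Q_\delta}|Du|^2\to 0$ by dominated convergence, and feeding this back in gives $J\delta^{n-2}\to 0$, i.e.\ $H^{n-2}=0$. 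No tangent maps are needed. What you propose is instead the Federer/Schoen--Uhlenbeck dimension reduction, which is the machinery the paper reserves for the stronger statement $\dim\mathrm{sing}(u)\leq n-3$ (Lemma 2 and Corollary 3). Your outline of that machinery is essentially right --- the $n=2$ base case via the logarithmic divergence of the energy of a nonconstant homogeneous map, the persistence of singular points under blow-up via $\epsilon$-regularity plus strong $W^{1,2}_{\mathrm{loc}}$ convergence, and the strict increase of $\dim S$ when you blow up at a singular point off the spine --- and if completed it buys you more than the theorem asks for. The cost is that it is far heavier than necessary for $H^{n-2}=0$ alone, and it relies on the unproved Compactness Theorem, whereas the paper's argument needs only $\epsilon$-regularity and monotonicity.

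There is, however, one genuine flaw in the transfer step as you state it: Hausdorff measure $H^{n-2}$ is \emph{not} semicontinuous in the direction you need under Hausdorff convergence of closed sets. A sequence of curves of length $j$ squeezed into balls of radius $1/j$ converges in Hausdorff distance to a point, so $\limsup_j H^1(A_j)=\infty$ while $H^1(\lim A_j)=0$; the inequality $H^{n-2}(\lim A_j)\geq\limsup_j H^{n-2}(A_j)$ simply fails. The standard repair is to run the whole argument with the Hausdorff content $H^{n-2}_\infty$ (covers of unrestricted diameter): one has $H^{n-2}_\infty(A)>0$ iff $H^{n-2}(A)>0$, the content \emph{is} upper semicontinuous along Hausdorff-convergent compact sets (a finite open cover of the limit eventually covers the approximating sets), and the density-point selection must correspondingly be made with the content-density lemma, which guarantees $\limsup_{\rho\downarrow 0}\rho^{2-n}H^{n-2}_\infty(\mathrm{sing}(u)\cap B_\rho(y_0))>c(n)>0$ for $H^{n-2}$-a.e.\ $y_0$ in a set of positive measure. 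With that substitution your argument closes; without it the key inequality $H^{n-2}(\mathrm{sing}(\phi)\cap \bar B_1)>0$ does not follow. A minor additional point: Corollary 2 of Section 3.5 only covers $\dim S(\Psi)=n$; the case $\dim S(\Psi)=n-1$ needs the separate (easy) observation that a homogeneous degree zero $W^{1,2}_{\mathrm{loc}}$ map of one transverse variable cannot jump, hence is constant.
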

\begin{proof}
    Let $K \subset \Omega$ be compact, $\delta_0 < \text{dist}(K, \partial \Omega)$. For $y \in \text{sing}(u) \cap K$, we have 
    \begin{align}
        \int_{B_\rho (y)} |Du|^2 \geq \epsilon \rho^{n-2} \hspace{0.5 cm} \forall \rho < \delta_0
    \end{align}
    by Corollary 1 of the previous section. For fixed $\delta < \delta_0$, choose a maximal pairwise disjoint collection of balls $\{B_{\delta/2}(y_j)\}_{j=1,...,n}$ with $y_j \in \text{sing}(u) \cap K$ so that for $j=1,...J$, $J$ is the maximum integer for which the collection exists. This is a cover of $\text{sing}(u) \cap K$, because if there is some $z \in (K \cap \text{sing}(u)) \setminus (\cup_{i=1}^J B_{\delta/2}(y_j)$, this would necessitate that $B_{\delta/2}(z)$ is also in the collection, contradicting the maximality of $\{B_{\delta/2}(y_j)\}_{j=1,...,J}$. By (1), but replacing $\rho$ and $y$ with $\delta/2$ and $y_j$ and summing over $j$, we have
    \begin{align}
    J\delta^{n-2} \leq 2^n \epsilon^{-1} \int_{\cup B_{\delta}(y_j)} |Du|^2 \leq 2^n \epsilon^{-1} \int_{Q_\delta} |Du|^2
    \end{align}
    where $Q_\delta=\{x: \text{dist}(x,K \cap \text{sing}(u)\}$. Multiplying through by $\delta^2$, we have 
    \[
    J\delta^n \leq 2^n \delta^2 \epsilon^{-1} \int_{Q_{\delta_0}} |Du|^2
    \]
    since $\{B_{\delta/2}(y_j)\}_{j=1,...,J}$ cover all of $\text{sing}(u) \cap K$, we can let $\delta \downarrow 0$, and we obtain that $\lambda(\text{sing}(u) \cap K)=0$, where $\lambda$ is Lebesgue measure. Thus, $\int_{Q_\delta} |Du|^2 \rightarrow 0$ as $\delta \downarrow 0$ by the dominated convergence theorem. By (2), this implies that $H^{n-2}(\text{sing}(u) \cap K)=0$, and since $K$ was an arbitrary compact set of $\Omega$, we have that $H^{n-2}=0$ as required.
\end{proof}
As a consequence, sing(u) contains only isolated points, and is empty for $\Omega \subset \mathbb{R}^2$.
\begin{center}
    
\begin{tikzpicture}

    \draw[->] (-1,0,0) -- (4,0,0) node[anchor=north east]{$x$};
    \draw[->] (0,-1,0) -- (0,4,0) node[anchor=north west]{$y$};
    \draw[->] (0,0,-1) -- (0,0,4) node[anchor=south]{$z$};

    \fill (1,1,1) circle (2pt) node[anchor=south west]{$p_1$};
    \fill (2,2,0.5) circle (2pt) node[anchor=south west]{$p_2$};
    \fill (3,1.5,2) circle (2pt) node[anchor=south west]{$p_3$};
    \fill (0.5,3,1.5) circle (2pt) node[anchor=south west]{$p_4$};
    \fill (2.5,2.5,3) circle (2pt) node[anchor=south west]{$p_5$};
    \draw[dashed] plot [smooth cycle] coordinates {(0.1,3,1.5) (1,3.5,2) (4,4,4) (3.2,1.5,3) (2.5,1,0.8) (1.5,0.5,1) (0.8,0.8,0.5)};

\end{tikzpicture}
\\ Figure 3: sing(u) for a subset $\Omega$ in $\mathbb{R}^3$
\end{center}
\subsection{The Flag $\mathcal{S}_0 \subset \mathcal{S}_1 \subset ... \subset \mathcal{S}_{n-3} = \mathcal{S}_{n-2} = \mathcal{S}_{n-1} = \text{sing(u)}$}

Continuing from Section 2.5, for any $y \in \Omega$ and any tangent map $\phi$ of $u$ at $y$, $S(\phi)$ is the linear subspace of points $y \in \mathbb{R}^n$ such that $\Theta_\phi (y)=\Theta_\phi(0)$. From this we immediately obtain:
\[
y \in \text{sing}(u) \iff \dim S(\phi) \leq n-1 
\]
for every tangent map $\phi$ of $u$ at $y$.

\begin{definition}{11 (The set $\mathcal{S}_j$)}
    For each $j=0,1,...,n-1$, we define
    \[
    S_j = \{y \in \operatorname{sing} u : \dim S(\phi) \leq j \text{ for all tangent maps } \phi \text{ of } u \text{ at } y\}
    \]
\end{definition}

Here we see that by dimensionality, $\mathcal{S}_{n-1}=\text{sing}(u)$ follows immediately from the fact that $y \in \text{sing}(u) \iff \dim S(\phi) \leq n-1$. Now if $\mathcal{S}_{n-3}$ is not equal to both $\mathcal{S}_{n-2}$ and $\mathcal{S}_{n-1}$, then there exists a $y \in \text{sing}(u)$ where there is a tangent map $\phi$ with $\dim S(\phi)=n-1$ or $n-2$, but then $H^{n-2}(S(\phi))=\infty$ and since $S(\phi) \subset \text{sing}(u)$, we have that $H^{n-2}(\text{sing}(u))=\infty$, which contradicts the fact that $H^{n-2}(\text{sing}(u))=0$ by the previous section. 

In fact, this induces the \textbf{flag}, that is, an increasing sequence of subsets in $\mathbb{R}^n$, given by
\[
\mathcal{S}_0 \subset \mathcal{S}_1 \subset ... \subset \mathcal{S}_{n-3} = \mathcal{S}_{n-2} = \mathcal{S}_{n-1} = \text{sing(u)}
\]

\subsection{Further Geometric Properties of sing(u)}
The flag we introduced in the previous section is motivated by the following lemma, which has applications to minimal surface theory and GMT. This is a kind of "dimension reducing" argument, first pioneered by Herbert Federer, who is considered the father of GMT.

\begin{lemma}{2}
    For each $j=0,1,...,n-3$, $\dim \mathcal{S}_j \leq j$, and for each $\alpha > 0$, $\mathcal{S}_0 \cap \{ x: \Theta_u(x)=\alpha \}$ is a discrete set.
\end{lemma}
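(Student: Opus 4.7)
The plan is to prove both assertions by blow-up/contradiction arguments at an accumulation point in $\mathcal{S}_j$. The key ingredients are the Compactness Theorem (Lemma 1), the tangent map construction (Definition 10), the maximum property $\Theta_\phi(y)\le\Theta_\phi(0)$ for homogeneous degree zero minimizers (Theorem 4), the linear-subspace structure of $S(\phi)$ (Theorem 5), and a mild extension of the upper semi-continuity result (Theorem 3) to sequences of varying minimizing maps $u_k\to u$ with varying base points $y_k\to y$, whose proof is essentially the same one already given using the monotonicity formula.

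For the discreteness statement, suppose for contradiction that there exist $y\in\mathcal{S}_0\cap\{\Theta_u=\alpha\}$ and a sequence of distinct points $y_k$ in this same set with $y_k\to y$. Set $\rho_k=|y_k-y|\downarrow 0$ and, after passing to a subsequence, assume $\xi_k:=(y_k-y)/\rho_k\to\xi\in S^{n-1}$. The energy bound (1) together with the Compactness Theorem gives, along a further subsequence, $u_{y,\rho_k}\to\phi$ in $W^{1,2}_{\mathrm{loc}}$ for some tangent map $\phi$ of $u$ at $y$. Generalized upper semi-continuity applied to $\Theta_{u_{y,\rho_k}}(\xi_k)=\Theta_u(y_k)=\alpha$ yields $\Theta_\phi(\xi)\ge\alpha$, while $\Theta_\phi(0)=\Theta_u(y)=\alpha$ combined with Theorem 4 yields $\Theta_\phi(\xi)\le\Theta_\phi(0)=\alpha$. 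Hence $\Theta_\phi(\xi)=\Theta_\phi(0)$, so $\xi\in S(\phi)$, and since $\xi$ is a nonzero vector and $S(\phi)$ is a linear subspace by Theorem 5 we obtain $\dim S(\phi)\ge 1$, contradicting $y\in\mathcal{S}_0$.

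For the bound $\dim\mathcal{S}_j\le j$ I would apply Federer's dimension reduction. Suppose for contradiction $H^s(\mathcal{S}_j)>0$ for some $s>j$. A standard measure-theoretic lemma (Federer's density theorem for Hausdorff measures) produces a base point $y_0\in\mathcal{S}_j$ at which
\[
\limsup_{\rho\downarrow 0}\rho^{-s}H^s_\infty\bigl(\mathcal{S}_j\cap B_\rho(y_0)\bigr)>0.
\]
Choose $\rho_k\downarrow 0$ realizing this limsup; the monotonicity-based energy bound and Lemma 1 then give, along a subsequence, $u_{y_0,\rho_k}\to\phi$ for some tangent map $\phi$ at $y_0$. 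The rescaled sets $A_k:=\rho_k^{-1}\bigl(\mathcal{S}_j\cap B_{\rho_k}(y_0)-y_0\bigr)\subset\bar B_1(0)$ carry a uniform lower bound on $H^s_\infty$, and by compactness of closed subsets of $\bar B_1$ in the Hausdorff metric we may extract a further subsequence with $A_k\to T$ for a compact $T\subset\bar B_1$ with $H^s_\infty(T)>0$ (by upper semi-continuity of $H^s_\infty$ under Hausdorff convergence); in particular $\dim T\ge s>j$. The goal is then to show $T\subset S(\phi)$, for this would force $\dim S(\phi)\ge\dim T>j$, contradicting $y_0\in\mathcal{S}_j$.

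The main obstacle is precisely the inclusion $T\subset S(\phi)$. For $\xi\in T$, pick $\xi_k\in A_k$ with $\xi_k\to\xi$ and set $y_k:=y_0+\rho_k\xi_k\in\mathcal{S}_j$. Generalized upper semi-continuity gives $\Theta_\phi(\xi)\ge\limsup\Theta_u(y_k)$, and Theorem 4 gives $\Theta_\phi(\xi)\le\Theta_\phi(0)=\Theta_u(y_0)$; the remaining step $\limsup\Theta_u(y_k)\ge\Theta_u(y_0)$ is the reverse of Theorem 3 and does not hold pointwise. The standard fix, which is the technical heart of Federer's dimension reduction, is to refine the initial choice of $y_0$ to a point of approximate continuity of the Borel, upper semi-continuous function $\Theta_u$ along $\mathcal{S}_j$ with respect to the measure $H^s$ restricted to $\mathcal{S}_j$; such points form a full $H^s$-measure subset by a Lusin-type argument, and at them the sequence $y_k$ can be selected to satisfy $\Theta_u(y_k)\to\Theta_u(y_0)$. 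This forces $\Theta_\phi(\xi)=\Theta_\phi(0)$, hence $\xi\in S(\phi)$, closing the contradiction.
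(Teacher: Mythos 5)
Your argument for the discreteness of $\mathcal{S}_0 \cap \{x : \Theta_u(x)=\alpha\}$ is correct and is essentially the paper's own proof: blow up at a hypothetical accumulation point, extract a tangent map $\phi$ with $\Theta_\phi(0)=\alpha$, use upper semi-continuity to get $\Theta_\phi(\xi)\ge\alpha$ at the limiting direction $\xi\in S^{n-1}$, cap it with Theorem 4, and conclude $\xi\in S(\phi)\setminus\{0\}$, contradicting $\dim S(\phi)=0$. You are in fact more careful than the paper in noting that the semi-continuity needed here is the version for a varying sequence of minimizers $u_{y,\rho_k}\to\phi$ with varying base points; the paper invokes Theorem 3 for this silently.

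For the bound $\dim\mathcal{S}_j\le j$ the paper gives no proof at all (it defers to its Lemmas 3 and 4, quoted from Simon without proof), so here you are attempting something the paper does not do. Your skeleton is the right one, but the step you correctly single out as the crux, $T\subset S(\phi)$, is not repaired by the fix you propose. A Lusin/approximate-continuity argument requires a Radon measure, and the restriction of $H^s$ to $\mathcal{S}_j$ is in general not locally finite (if $\dim\mathcal{S}_j>s$ then $H^s(\mathcal{S}_j)=\infty$ on every ball around a point of positive upper content density); reducing to a compact subset of positive finite $H^s$-measure would require knowing $\mathcal{S}_j$ is analytic and invoking a Besicovitch--Davies type theorem, which is a nontrivial detour you do not supply. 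Worse, even at a point of approximate continuity you only control the \emph{measure} of the set where $\Theta_u$ deviates from $\Theta_u(y_0)$, not its values along the particular sequences $y_k=y_0+\rho_k\xi_k$ that you are forced to use in order to reach a prescribed $\xi\in T$; choosing $y_k$ so that $\Theta_u(y_k)\to\Theta_u(y_0)$ is in direct tension with requiring $\xi_k\to\xi$, and you give no argument that both can be arranged for content-almost-every $\xi\in T$. The standard repair, which is exactly the content of the paper's Lemma 3, avoids two-sided continuity of $\Theta_u$ altogether: decompose $\mathcal{S}_j$ countably by density level, and prove by a compactness argument in which the density defect $\epsilon_k\downarrow 0$ along the contradiction subsequence that the rescaled sets $\eta_{y,\rho}\{x\in B_\rho(y):\Theta_u(x)\ge\Theta_u(y)-\epsilon\}$ lie in a $\delta$-neighborhood of a $j$-plane at all small scales; the limit point then lands in $\{\Theta_\phi\ge\Theta_\phi(0)\}=S(\phi)$ precisely because $\epsilon_k\to 0$. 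Lemma 4 then converts this uniform flatness into $H^{j+\beta(\delta)}(\mathcal{S}_j)=0$. If you rewrite your second half around that $\epsilon_k\downarrow 0$ contradiction argument instead of approximate continuity, the proof closes.
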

Here, $\dim$ refers to Hausdorff dimension, and thus $\dim \mathcal{S}_j \leq j \implies H^{j+\epsilon}(\mathcal{S}_j)=0$ for each $\epsilon>0$. Here, we will omit the proof that $\dim \mathcal{S}_j \leq j$, as that will require some heavy machinery from GMT, though I will include those lemmas needed to prove the result at the end.

\begin{proof}{($\mathcal{S}_0 \cap \{ x: \Theta_u(x)=\alpha \}$ is a discrete set)}
    Suppose that $\mathcal{S}_0 \cap \{ x: \Theta_u(x)=\alpha \}$ is not a discrete set for some $\alpha>0$. Then there are $y,y_j \in \mathcal{S}_0 \cap \{ x: \Theta_u(x)=\alpha \}$ such that $y_j \neq y$ for each $j$, and $y_j \rightarrow y$. Now let $\rho_j=|y_j-y|$, and consider the scaled maps $u_{y,\rho_j}$. By compactness, there exists a subsequence $\rho_{j'}$ such that $u_{y,\rho_{j'}} \rightarrow \phi$, and $\phi$ is definitionally the tangent map of $u$ at $y$. Moreover, we have that $\Theta_{\phi}(0)=\Theta_{u}(y)=\alpha$.
    Now define $\xi_j=|y_j-y|^{-1}(y_j-y) \in S^{n-1}$. Suppose that the subsequence $j'$ from above is such that $\xi_{j'}$ converges to some $\xi \in S^{n-1}$. Note that the transformation $x \mapsto y+\rho_j x$ takes $y_j$ to $\xi_j$, and hence we have that
    \[
    \Theta_u(y_j)=\Theta_{u_{y,\rho_j}}(\xi_j)=\alpha
    \]
    for each $j$. By the upper semi-continuity of the density function, we have that $\Theta_{\phi}(\xi) \geq \alpha$. Therefore, $\Theta_{\phi}(x)$ attains its maximum at 0, and so we have $\Theta_\phi(\xi)=\Theta_{\phi}(0)=\alpha$. Thus, $\xi \in S(\phi)$, contradicting the fact that $S(\phi)=\{ 0 \}$ since we assumed that $y \in \mathcal{S}_0$.
\end{proof}

\begin{corollary}{3}
    $\dim \text{sing}(u) \leq n-3$, and if $N$ is a 2-dimensional surface of genus $\geq 1$, then $\dim \text{sing}(u) \leq n-4$. More generally, if all tangent maps $\phi \in W_{\text{loc}}^{1,2}(\mathbb{R}^m; N)$ of $u$ satisfy $\dim S(\phi) \leq m$, then $\dim \text{sing}(u) \leq m$.
\end{corollary}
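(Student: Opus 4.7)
My plan is to read off the Hausdorff dimension bounds from the flag
\[
\mathcal{S}_0 \subset \mathcal{S}_1 \subset \cdots \subset \mathcal{S}_{n-3} = \mathcal{S}_{n-2} = \mathcal{S}_{n-1} = \text{sing}(u)
\]
established in Section 4.2, combined with Lemma 2's bound $\dim \mathcal{S}_j \leq j$. For the first assertion, the flag identifies $\text{sing}(u)$ with $\mathcal{S}_{n-3}$, so Lemma 2 immediately yields $\dim \text{sing}(u) \leq n-3$. For the ``more generally'' clause, the hypothesis $\dim S(\phi) \leq m$ for every tangent map $\phi$ of $u$ implies $\mathcal{S}_m = \text{sing}(u)$ directly from the definition of $\mathcal{S}_m$; Lemma 2 then yields $\dim \text{sing}(u) \leq m$ when $m \leq n-3$, and the first part of the corollary absorbs the (vacuous) range $m \geq n-3$.

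The substance of the result is the genus $\geq 1$ improvement to $n-4$. My plan is to show that under this hypothesis no tangent map $\phi$ at a singular point can have $\dim S(\phi) = n-3$; the cases $\dim S(\phi) \in \{n-2, n-1\}$ are already ruled out by the argument that collapses the flag at stage $n-3$. Suppose for contradiction that $\phi$ is a tangent map at $y \in \text{sing}(u)$ with $\dim S(\phi) = n-3$. By Theorem 5, $\phi(x+z) = \phi(x)$ for every $z \in S(\phi)$, so after an orthogonal change of coordinates I may write $\phi(x_1, \ldots, x_n) = \psi(x_1, x_2, x_3)$, where $\psi: \mathbb{R}^3 \to N$ is a nonconstant homogeneous degree zero energy minimizing map. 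Applying Theorem 6 to $\psi$ on $\mathbb{R}^3$ forces $\text{sing}(\psi) \subset \{0\}$, so $\psi$ is smooth and harmonic on $\mathbb{R}^3 \setminus \{0\}$; homogeneity then makes the restriction $\psi|_{S^2}: S^2 \to N$ a smooth harmonic map from the round $2$-sphere into $N$.

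The hard part will be converting the topology of $N$ into a rigidity statement for $\psi|_{S^2}$. Since $N$ is a closed surface of genus $\geq 1$, its universal cover $\tilde N$ is diffeomorphic to $\mathbb{R}^2$ (genus $1$) or to $\mathbb{H}^2$ (genus $\geq 2$); both are contractible (so $\pi_2(N) = 0$) and admit a canonical metric of non-positive sectional curvature. Because $S^2$ is simply connected, $\psi|_{S^2}$ lifts to a harmonic map $\tilde\psi: S^2 \to \tilde N$, and the Bochner identity for harmonic maps, with $\text{Ric}_{S^2} > 0$ playing against $\text{sec}_{\tilde N} \leq 0$, forces $\tilde\psi$ to be constant (in the flat case one can equivalently apply the maximum principle to each Euclidean coordinate of $\tilde\psi$). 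Therefore $\psi|_{S^2}$ is constant, and by homogeneity of degree zero $\psi$ is constant on all of $\mathbb{R}^3$, so $\phi$ itself is constant. But a constant tangent map at $y$ forces $y \in \text{reg}(u)$ by Corollary 1 of Section 2.4, contradicting $y \in \text{sing}(u)$. This establishes $\mathcal{S}_{n-4} = \text{sing}(u)$, and one last application of Lemma 2 gives $\dim \text{sing}(u) \leq n-4$.
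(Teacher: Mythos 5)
Your treatment of the first and third assertions coincides with the paper's proof: identify $\operatorname{sing}(u)$ with $\mathcal{S}_{n-3}$ (respectively $\mathcal{S}_m$) via the flag and the definition of $\mathcal{S}_j$, then invoke Lemma 2. Your reduction of the genus case also follows the paper's route up to a point: assume a tangent map with $\dim S(\phi)=n-3$, split off the $S(\phi)$-directions, and arrive at a nonconstant harmonic map from $S^2$ into $N$. The paper stops there and cites Jost for the nonexistence of such maps; you attempt to supply the missing nonexistence argument, and that is where there is a genuine gap.

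The problem is your Bochner step. You argue that the universal cover of a genus $\geq 1$ surface \emph{admits} a metric of non-positive sectional curvature (flat or hyperbolic) and then apply the Eells--Sampson Bochner identity with $\operatorname{Ric}_{S^2}>0$ against $\operatorname{sec}_{\tilde N}\leq 0$. But harmonicity of $\psi|_{S^2}$ is defined with respect to the \emph{given} Riemannian metric on $N$ (the paper only assumes $N$ is a compact $C^\infty$ surface of genus $\geq 1$, isometrically embedded in $\mathbb{R}^p$), and the lifted metric on $\tilde N$ need not have non-positive curvature --- a bumpy metric on the torus can have regions of positive curvature, and then the curvature term in the Bochner formula has the wrong sign. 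The same objection applies to your ``maximum principle on Euclidean coordinates'' variant, which presupposes the torus is flat. The conclusion you want is nevertheless true, but the standard proof is conformal rather than curvature-theoretic: the Hopf differential of a harmonic map from $S^2$ is a holomorphic quadratic differential and hence vanishes, so $\psi|_{S^2}$ is weakly conformal, hence holomorphic or antiholomorphic for suitable complex structures, hence constant or a branched covering; a nonconstant branched covering would have nonzero degree, which is impossible since $\pi_2(N)=0$ forces every map $S^2\to N$ to have degree zero (or, directly, by Riemann--Hurwitz). Either supply that argument or simply cite the nonexistence result as the paper does.
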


\begin{remark}
    The above corollary implies that $\dim \text{sing}(u) \leq m$ for every locally energy minimizing map $u \in W^{1,2}(\Omega;N)$ if $N$ is such that all homogeneous degree zero locally energy minimizing maps $\phi \in W^{1,2}(\mathbb{R}^n; N)$ satisfy $\dim S(\phi) \leq m$. 
    For example, if $\dim N = 2$ and $N$ has genus $g \geq 1$, we claim that this holds for $m=n-4$. Indeed suppose there is a homogeneous degree zero locally energy minimizing map $\phi$ of $\mathbb{R}^n \rightarrow N$ with $\dim S(\phi)=n-3$. WLOG, we can assume $S(\phi)=\{ 0 \} \times \mathbb{R}^{n-3}$, then we have $\phi(x,y)=\phi_0(|x|^{-1}x)$ for $x \in \mathbb{R}^3 \setminus \{ 0 \}$ and $y \in \mathbb{R}^{n-3}$. Then $\phi_0$ is a nonconstant minimizing map from $S^2$ into $N$, but such maps are known not to exist by the work of Jürgen Jost (1984)\cite{jost1984harmonic}.
\end{remark}

\begin{proof}
    We know that $\text{sing}(u)=\mathcal{S}_{n-3}$, and hence by the lemma with $j=n-3$, we obtain precisely that $\dim \text{sing}(u) \leq n-3$ as claimed. If $\dim N =2$ with genus $\geq 1$, we claim that $\text{sing}(u)=\mathcal{S}_{n-4}$, which gives that $\dim \text{sing}(u) \leq n-4$.
    Lastly, if $\dim S(\phi) \leq m$ for all tangent maps $\phi$, then by definition $\mathcal{S}_m=\text{sing}(u)$, and we have that $\dim \text{sing}(u) \leq m$ by the lemma.
\end{proof}

Here, I would like to provide the two lemmas needed to complete the proof that $\dim \mathcal{S}_j \leq j$. I will not provide the proofs, but they are quite technical and can be found in Simon's book on energy minimizing maps.

\begin{lemma}{3}
    For each \( y \in S_j \), and each \( \delta > 0 \) there is an \( \epsilon > 0 \) (depending on \( u, y, \delta \)) such that for each \( \rho \in (0, \epsilon] \)

\[ \eta_{y,\rho} \left\{ x \in B_{\rho}(y) : \Theta_u(x) \geq \Theta_u(y) - \epsilon \right\} \subset \text{the } \delta\text{-neighbourhood of } L_{y,\rho} \]

for some \( j \)-dimensional subspace \( L_{y,\rho} \) of \( \mathbb{R}^n \) (see Figure 3.1).

\end{lemma}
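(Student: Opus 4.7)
The plan is to argue by contradiction, extract a convergent sequence of rescalings that produces a tangent map, and then use the structure theorem for $S(\phi)$ (Theorem 5) together with the maximality of $\Theta_\phi$ at the origin (Theorem 4) to get a contradiction. Suppose the statement fails. Then one can fix $y \in \mathcal{S}_j$ and $\delta_0 > 0$ with the following property: for every integer $k \geq 1$, there exists $\rho_k \in (0, 1/k]$ such that no $j$-dimensional subspace $L \subset \mathbb{R}^n$ satisfies
\[
\eta_{y,\rho_k}\bigl\{ x \in B_{\rho_k}(y) : \Theta_u(x) \geq \Theta_u(y) - 1/k \bigr\} \subset (L)_{\delta_0},
\]
where $\eta_{y,\rho}(x) = \rho^{-1}(x-y)$ and $(L)_{\delta_0}$ denotes the $\delta_0$-neighborhood of $L$.

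Next, I would apply the Compactness Theorem (Lemma 1) to the rescaled maps $u_{y,\rho_k}$: since their energies on each fixed ball $B_\sigma(0)$ are uniformly bounded by the monotonicity formula (inequality (1) of the paper), a subsequence (not relabelled) converges in $W^{1,2}_{\text{loc}}$ to a tangent map $\phi$ of $u$ at $y$. Because $y \in \mathcal{S}_j$, the subspace $S(\phi)$ has $\dim S(\phi) \leq j$, so one can enlarge it to a $j$-dimensional subspace $\widetilde L \supset S(\phi)$. By the failure of the conclusion, for each $k$ there is a point $\xi_k \in \overline{B_1(0)}$ with $\mathrm{dist}(\xi_k, \widetilde L) \geq \delta_0$ such that $\xi_k = \eta_{y,\rho_k}(x_k)$ for some $x_k \in B_{\rho_k}(y)$ with $\Theta_u(x_k) \geq \Theta_u(y) - 1/k$. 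Passing to a further subsequence, $\xi_k \to \xi_\infty$ with $\mathrm{dist}(\xi_\infty, \widetilde L) \geq \delta_0$, so in particular $\xi_\infty \notin S(\phi)$.

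The crux is then to show $\xi_\infty \in S(\phi)$, which will give the contradiction. The change of variables $\widetilde x = y + \rho_k x$ in the definition of the density yields the scaling identity $\Theta_{u_{y,\rho_k}}(\xi_k) = \Theta_u(x_k) \geq \Theta_u(y) - 1/k$. Applying the monotonicity formula to each $u_{y,\rho_k}$ gives, for every fixed $\sigma > 0$,
\[
\Theta_{u_{y,\rho_k}}(\xi_k) \leq \sigma^{2-n} \int_{B_\sigma(\xi_k)} |Du_{y,\rho_k}|^2.
\]
For any $\eta > 0$ and sufficiently large $k$, $B_\sigma(\xi_k) \subset B_{\sigma+\eta}(\xi_\infty)$, and the $W^{1,2}_{\text{loc}}$ convergence $u_{y,\rho_k} \to \phi$ gives $\int_{B_{\sigma+\eta}(\xi_\infty)} |Du_{y,\rho_k}|^2 \to \int_{B_{\sigma+\eta}(\xi_\infty)} |D\phi|^2$. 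Letting $k \to \infty$, then $\eta \downarrow 0$, then $\sigma \downarrow 0$ yields
\[
\Theta_u(y) \leq \Theta_\phi(\xi_\infty).
\]
Since $\Theta_\phi(0) = \Theta_u(y)$ (equality of density and rescaled energy for tangent maps, shown in Section 2.4), this gives $\Theta_\phi(\xi_\infty) \geq \Theta_\phi(0)$; combined with the reverse inequality from Theorem 4, $\xi_\infty \in S(\phi) \subset \widetilde L$, contradicting $\mathrm{dist}(\xi_\infty, \widetilde L) \geq \delta_0$.

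The main obstacle, as outlined above, is the \emph{joint} upper semi-continuity of the density with respect to simultaneous convergence of both the point ($\xi_k \to \xi_\infty$) and the energy minimizing map ($u_{y,\rho_k} \to \phi$). The one-variable version proved as Theorem 3 is insufficient, and passing to the limit requires carefully chaining the monotonicity formula with $W^{1,2}_{\text{loc}}$ convergence on slightly enlarged balls before sending $\sigma$ and $\eta$ to zero. Once this is established, every other piece of the argument — compactness to get $\phi$, the bound $\dim S(\phi) \leq j$, and Theorem 4 to force membership in $S(\phi)$ — is essentially bookkeeping.
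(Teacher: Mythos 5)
The paper gives no proof of this lemma at all --- it explicitly defers to Section 3.4 of Simon's book --- so there is no internal argument to compare against; your proof is correct and is in fact the standard one from that reference (negate with $\epsilon = 1/k$, extract a tangent map $\phi$ by compactness, enlarge $S(\phi)$ to a $j$-plane $\widetilde L$ using $y\in\mathcal S_j$, and force the limit $\xi_\infty$ of the offending points into $S(\phi)$, contradicting $\mathrm{dist}(\xi_\infty,\widetilde L)\ge\delta_0$). You also correctly isolate the one step that Theorem 3 does not cover, namely the upper semicontinuity of the density under simultaneous convergence of the points $\xi_k$ and the minimizers $u_{y,\rho_k}$, and your derivation of it --- monotonicity for $u_{y,\rho_k}$ at $\xi_k$, strong local convergence of energies from the Compactness Theorem, then $k\to\infty$, $\eta\downarrow 0$, $\sigma\downarrow 0$ in that order --- is sound.
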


\begin{lemma}{4}
    There is a function \( \beta : (0, \infty) \to (0, \infty) \) with \( \lim_{t \to 0} \beta(t) = 0 \) such that if \( \delta > 0 \) and if \( A \) is an arbitrary subset of \( \mathbb{R}^n \) having the property \( (*) \) above, then

\[ \mathcal{H}^{j + \beta(\delta)}(A) = 0. \]

The remainder of the proof is technical, yet standard in Geometric Measure Theory, and can be found in Section 3.4 of Simon (1996, pp. 54-57)\cite{simon1996theorems}

\end{lemma}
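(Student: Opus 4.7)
The approach I would take is the classical Federer-type dimension-reducing covering argument: property $(*)$ is scale-invariant, so at every small scale around every point of $A$ the set is trapped in a $\delta$-tubular neighbourhood of some $j$-plane, and iterating this geometric constraint produces increasingly efficient covers of $A$, forcing its Hausdorff dimension down towards $j$. The first ingredient is an elementary packing estimate: the intersection of $B_\rho$ with the $\delta\rho$-neighbourhood of any $j$-dimensional affine subspace can be covered by at most $C(n)\delta^{-j}$ balls of radius $\delta\rho$, since such a slab is quasi-isometric to $B^{j}_\rho \times B^{n-j}_{\delta\rho}$ and the transverse factor already has the right diameter while the tangential factor needs on the order of $\delta^{-j}$ balls.

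Next I would run an iterative refinement. Starting from a cover of $A \cap K$ (for $K \subset \mathbb{R}^n$ compact) by a single ball $B_{\rho_0}(y_0)$ with $y_0 \in A$ and $\rho_0$ below the threshold from Lemma 3, property $(*)$ traps $A \cap B_{\rho_0}(y_0)$ in the $\delta\rho_0$-tube about some $j$-plane through $y_0$, and by the packing estimate together with a Vitali-type selection the trapped set can be recovered by at most $C\delta^{-j}$ balls of radius $2\delta\rho_0$ each centred at a point of $A$. Recentring in $A$ is essential so that property $(*)$ is again applicable inside each new ball. Iterating $k$ times then yields a cover of $A \cap K$ by at most $(C\delta^{-j})^k$ balls of radius $(2\delta)^k \rho_0$.

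Computing the $s$-dimensional Hausdorff content of the iterated cover gives
\[
\mathcal{H}^{s}_{(2\delta)^k \rho_0}(A \cap K) \;\leq\; (C\delta^{-j})^k \bigl((2\delta)^k \rho_0\bigr)^s \;=\; (2\rho_0)^s \bigl(C \cdot 2^s \cdot \delta^{s-j}\bigr)^k,
\]
which tends to $0$ as $k \to \infty$ provided $C \cdot 2^s \cdot \delta^{s-j} < 1$; for $s$ close to $j$ this reduces to $s - j > (\log C + s\log 2)/|\log \delta|$. Defining $\beta(\delta) := 2(\log C + (j+1)\log 2)/|\log \delta|$ for $\delta$ sufficiently small (and extended arbitrarily elsewhere) produces a function with $\beta(\delta) \to 0$ as $\delta \to 0$, and the displayed estimate then forces $\mathcal{H}^{j+\beta(\delta)}(A \cap K) = 0$; exhausting $\mathbb{R}^n$ by compact sets yields $\mathcal{H}^{j+\beta(\delta)}(A) = 0$.

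The main obstacle I anticipate lies in making the iteration genuinely uniform. Property $(*)$ holds only at centres lying in $A$ and only below some threshold $\epsilon$ that may a priori depend on the base point, so to iterate one needs a single $\epsilon$ valid for every centre produced at every stage. On a compact piece of $A$ this should follow from upper semi-continuity of $\Theta_u$ together with a standard compactness argument, but verifying the uniformity cleanly, so that the constants $C$ and the factor of $2$ from Vitali do not corrupt the geometric decay $\delta^{s-j}$, is the delicate point. Once uniformity is in hand, the rest of the argument is the bookkeeping sketched above.
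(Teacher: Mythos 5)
The paper itself gives no proof of this lemma; it defers entirely to Section 3.4 of Simon's book. Your proposal is, in essence, exactly the argument given there: the Federer-style iterated covering in which each ball centred in $A$ is replaced by roughly $C(n)\delta^{-j}$ balls of radius comparable to $\delta$ times the old radius, the $s$-dimensional Hausdorff content of the $k$-th generation cover is $(2\rho_0)^s\,(C\,2^s\,\delta^{s-j})^k$, and $\beta(\delta)\sim C'/|\log\delta|$ is read off from the condition $C\,2^s\,\delta^{s-j}<1$. The slab packing estimate, the recentring of the new balls at points of the set so that property $(*)$ can be reapplied, and the final logarithmic choice of $\beta$ are all correct and constitute the substance of the reference proof.

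The one place your plan goes astray is the uniformity issue you flag at the end. Compactness of $A\cap K$ buys you nothing, because $A$ is an arbitrary subset of $\mathbb{R}^n$ and the threshold $\epsilon=\epsilon(y)$ below which $(*)$ holds need not behave continuously in $y$; and upper semi-continuity of $\Theta_u$ is not available here at all, since Lemma 4 is a purely measure-theoretic statement about abstract sets satisfying $(*)$, with no map $u$ in sight. The standard and elementary fix is a countable decomposition: let $A_q=\{y\in A:\ (*)\ \text{holds at }y\ \text{for all}\ \rho\le 1/q\}$, so that $A=\bigcup_{q\ge 1}A_q$, and further chop each $A_q$ into pieces of diameter less than $1/q$. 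On each such piece the threshold is uniform by construction, and provided $\delta\le 1/2$ the radii in your iteration decrease, so every centre produced at every stage (chosen in $A_q$, not merely in $A$) still satisfies $(*)$ at the relevant scale. Your covering computation then gives $\mathcal{H}^{j+\beta(\delta)}$ measure zero for each piece, and countable subadditivity of Hausdorff measure finishes the proof. With that substitution in place of the compactness argument, your proof is complete.
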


\subsection{Open Problems Pertaining to sing(u)}
There are several fascinating open problems relating to the singular set which draw inspiration from Geometric Analysis and GMT. Here are several based on the work of Leon Simon and Fanghua Lin (1991, pp. 169)\cite{lin1991singular}:

\begin{op}{1}
Is sing(u) real-analytic or merely smooth?
\end{op}

This problem lies at the intersection of GMT, PDEs, and the calculus of variations. Understanding the regularity of the singular set has profound implications for the qualitative behavior of PDEs, especially as it pertains to elliptic and parabolic equations. Smooth or real-analytic singular sets would provide a higher degree of predictability and the structure of these solutions. This also bears on the greater question of determining the nature of singularities in various physical and geometric contexts, such as material science, fluid dynamics and general relativity.

\begin{op}{2}
Consider sing(u), where u is an energy minimizing map from an n-dimensional domain to $S^2$. Is $H^{n-3}(sing(u)) < \infty$?
\end{op}

As it pertains to Geometric Analysis and the theory of harmonic maps, this problem is of great significance. This question addresses the size and structure of the singularities that can occur in such maps. If $H^{n-3}(sing(u))$ is indeed finite, this would suggest that these singularities are in fact confined to a relatively small subset of the domain, which bears on the regularity of solutions. Moreover, this would enhance the understanding of the geometry of energy minimizing maps onto the target manifold $S^2$ and the analytical properties of $u$.

\begin{op}{3}
Under what conditions is sing(u) unique for a given energy minimizing map u?
\end{op}

This is a crucial problem for understanding the stability and predictability of singularities. Uniqueness would imply that, under specified conditions, the formulation of singularities follows a deterministic pattern, which is salient for understanding the behavior of solutions to variational problems. This has significant consequences for the modeling of physical systems where singularities represent defects or discontinuities, such as in materials science or fluid dynamics. Such insights could also bridge the gap between local and global properties of solutions.
\newpage

\end{document}